\documentclass{amsart} 
\usepackage{a4wide}
\usepackage{setspace}
\linespread{1.5}
\usepackage{amsmath,amsthm,amssymb}
\usepackage{color}
\usepackage{bbm}
\usepackage[dvips]{graphicx}
\usepackage{longtable}
\usepackage{url}
\usepackage{enumerate}
\usepackage{float}

\providecommand{\MR}{\relax\ifhmode\unskip\space\fi MR }

\pagestyle{myheadings}

\newcommand{\transp}{{\scriptscriptstyle \top}}
\newtheorem{thm}{\bf{Theorem}}[section]
\newtheorem{lemma}[thm]{\bf{Lemma}}

\newtheorem{rem}[thm]{\bf{Remark}}

\begin{document}

\title[Convergence analysis of DDP with cut selection]{Dual Dynamic Programming with cut selection: convergence proof and numerical experiments}

\author{Vincent Guigues}

\maketitle

\begin{center}
    Funda\c{c}\~ao Getulio Vargas, School of applied mathematics\\
            190 Praia de Botafogo, Rio de Janeiro, Brazil,\\
            Tel: 55 21 3799 6093, Fax: 55 21 3799 5996, {{\tt vguigues@fgv.br}}
\end{center}


\begin{abstract} We consider convex optimization problems formulated using dynamic programming
equations. Such problems can be solved using the 
Dual Dynamic Programming algorithm combined with the Level 1 cut selection strategy or the Territory algorithm
to select the most relevant Benders cuts.
We propose a limited memory variant of Level 1 and show the convergence
of DDP combined with the Territory algorithm, Level 1 or its variant for nonlinear
optimization problems. In the special case of linear programs, we show convergence in a finite
number of iterations.
Numerical simulations illustrate the interest of our variant
and show that it can be much quicker than a simplex algorithm on some large instances
of portfolio selection and inventory problems.
\end{abstract}

\keywords{Dynamic Programming \and Nonlinear programming  \and Decomposition algorithms \and Dual Dynamic Programming \and Pruning methods}

\section{Introduction}

Dual Dynamic Programming (DDP) is a decomposition algorithm to solve
some convex optimization problems. 
The algorithm computes lower approximations of the cost-to-go functions expressed as a supremum of affine functions called
optimality cuts. Typically, at each iteration, a fixed number of cuts is added for each cost-to-go function.
It is the deterministic counterpart of the Stochastic Dual Dynamic Programming (SDDP) algorithm pioneered by \cite{pereira}.
SDDP is still studied and has been the object of several
recent improvements and extensions \cite{shapsddp}, \cite{philpmatos}, \cite{guiguesrom10}, \cite{guiguesrom12}, \cite{guiguescoap2013},
\cite{kozmikmorton}, \cite{shaptekaya2}, \cite{dpcuts0}, \cite{pfeifferetalcuts}. 
In particular, these last three references discuss strategies for selecting the most relevant optimality cuts
which can be applied to DDP.
In stochastic optimization, the problem of cut selection for lower approximations of the cost-to-go functions
associated to each node of the scenario tree was discussed for the first time in \cite{rusparallel93} where only
the active cuts are selected.
Pruning strategies of basis (quadratic) functions have been proposed in \cite{gaubenezheng} and \cite{mcdesgaube}
for max-plus based approximation methods which, similarly to SDDP, approximate the cost-to-go functions of a nonlinear
optimal control problem by a supremum of basis functions. More precisely, in \cite{gaubenezheng},
a fixed number of cuts is pruned and cut selection is done solving a combinatorial optimization problem.
For SDDP, in \cite{shaptekaya2} it is suggested at some iterations to eliminate redundant cuts
(a cut is redundant if it is never active in describing the lower approximate cost-to-go function).
This procedure is called {\em{test of usefulness}} in \cite{pfeifferetalcuts}.
This requires solving at each stage as many linear programs as there are cuts. 
In \cite{pfeifferetalcuts} and \cite{dpcuts0}, only the cuts that have the largest value for at least
one of the trial points computed are considered relevant, see Section \ref{algorithmddp} for details. 
This 
strategy is called the {\em{Territory algorithm}} in \cite{pfeifferetalcuts}
and Level 1 cut selection in \cite{dpcuts0}. It was presented for the first time
in 2007 at the ROADEF congress by David Game and Guillaume Le Roy (GDF-Suez), see \cite{pfeifferetalcuts}.
However, a difference between  \cite{pfeifferetalcuts} and \cite{dpcuts0}
is that in \cite{pfeifferetalcuts} the nonrelevant cuts are pruned whereas 
in \cite{dpcuts0} all computed cuts are stored and the relevant cuts are selected from this set of cuts. 

In this context the contributions of this paper are as follows.
We propose a limited memory variant of Level 1. 
We study the convergence of DDP combined with a class of cut selection strategies that satisfy an assumption
(Assumption (H2), see Section \ref{csmethods}) satisfied by the Territory algorithm, the Level 1 cut selection strategy, and 
its variant. In particular, the analysis applies to (i) mixed cut selection strategies 
that use the Territory
algorithm, Level 1, or its variant to select a first set of cuts and then apply the test of usefulness to these cuts and
to (ii) the Level $H$
cut selection strategy from \cite{dpcuts0} that keeps at each trial point the $H$ cuts having the largest values.
In the case when the problem is linear, we additionally show convergence in a finite number of iterations.
Numerical simulations show the interest of the proposed limited memory variant of Level 1  
and show that it can be much more efficient than a simplex algorithm on some instances
of portfolio selection and inventory problems.

The outline of the study is as follows. 
Section \ref{propvaluefunction} recalls from \cite{guigues2014cvsddp} a formula
for the subdifferential of the value function of a convex optimization problem. It is useful for the implementation and convergence analysis of
DDP applied to convex nonlinear problems.
Section \ref{pbform} describes the class of problems considered and assumptions.
Section \ref{ddpwithout} recalls the DDP algorithm while Section \ref{csmethods} recalls
Level 1 cut selection, the Territory algorithm and describes the limited memory variant we propose.
Section \ref{convanalysis} studies the convergence of DDP with cut selection applied to nonlinear problems
while Section \ref{convanalysislp} studies the convergence for linear programs.
Numerical simulations are reported in Section  \ref{numsim}.

We use the following notation and terminology:
\par - The usual scalar product in $\mathbb{R}^n$ is denoted by $\langle x, y\rangle = x^\transp y$ for $x, y \in \mathbb{R}^n$.
\par - $\mbox{ri}(A)$ is the relative interior of set $A$.
\par - $\mathbb{B}_n$ is the closed unit ball in $\mathbb{R}^n$.
\par - dom($f$) is the domain of function $f$.
\par - $|I|$ is the cardinality of the set $I$.
\par - $\mathbb{N}^*$ is the set of positive integers.

\section{Formula for the subdifferential of the value function of a convex optimization problem} \label{propvaluefunction}

We recall from \cite{guigues2014cvsddp} a formula for the subdifferential
of the value function of a convex optimization problem. It plays a central role in
the implementation and convergence analysis of DDP method applied to convex problems
and will be used in the sequel.

Let $\mathcal{Q}: X\rightarrow {\overline{\mathbb{R}}}$, be the value function given by
\begin{equation} \label{vfunctionq}
\mathcal{Q}(x)=\left\{
\begin{array}{l}
\inf_{y \in \mathbb{R}^{n}} \;f(x,y)\\
y \in S(x):=\{y\in Y \;:\;Ax+By=b,\;g(x,y)\leq 0\}.
\end{array}
\right.
\end{equation}
Here, $A$ and $B$ are matrices of appropriate dimensions, and
$X \subseteq \mathbb{R}^m$ and $Y \subseteq \mathbb{R}^n$ are nonempty, compact, and convex
sets. Denoting by
\begin{equation}\label{epsfatten}
X^\varepsilon := X + \varepsilon  \mathbb{B}_m
\end{equation}
the $\varepsilon$-fattening of the set $X$, we make the following assumption (H):
\begin{itemize}
\item[1)] $f:\mathbb{R}^m \small{\times} \mathbb{R}^n \rightarrow \mathbb{R} \cup \{+\infty\}$ is 
lower semicontinuous, proper, and convex.
\item[2)] For $i=1,\ldots,p$, the $i$-th component of function
$g(x,y)$ is a convex lower semicontinuous function
$g_i:\mathbb{R}^m \small{\times} \mathbb{R}^n \rightarrow \mathbb{R} \cup \{+\infty\}$.
\item[3)] There exists $\varepsilon>0$ such that $X^{\varepsilon}{\small{\times}}  Y \subset \mbox{dom}(f)$.
\end{itemize}

Consider the dual problem 
\begin{equation}\label{dualpb}
\displaystyle \sup_{(\lambda, \mu) \in \mathbb{R}^q \small{\times} \mathbb{R}_{+}^{p} }\; \theta_{x}(\lambda, \mu)
\end{equation}
for the dual function
$$
\theta_{x}(\lambda, \mu)=\displaystyle \inf_{y \in Y} \;f(x,y) + \lambda^\transp (Ax+By-b) + \mu^\transp g(x,y).
$$
We denote by $\Lambda(x)$ the set of optimal solutions of the  dual problem \eqref{dualpb}
and we use the notation
$$
\mbox{Sol}(x):=\{y \in S(x) : f(x,y)=\mathcal{Q}(x)\}
$$
to indicate the solution set to \eqref{vfunctionq}.

It is well known that under Assumption (H), $\mathcal{Q}$ is convex.
The description of the subdifferential of $\mathcal{Q}$ is given in
the following lemma:
\begin{lemma}[Lemma 2.1 in \cite{guigues2014cvsddp}]\label{dervaluefunction} 
Consider the value function $\mathcal{Q}$ given by \eqref{vfunctionq} and take $x_0 \in X$
such that $S(x_0)\neq \emptyset$.
Let Assumption (H) hold and
assume the Slater-type constraint qualification condition:
$$
there \;exists\; (\bar x, \bar y) \in X{\small{\times}}\emph{ri}(Y) \mbox{ such that }A \bar x + B \bar y = b \mbox{ and } (\bar x, \bar y) \in \emph{ri}(\{g \leq 0\}).
$$
Then $s \in \partial \mathcal{Q}(x_0)$ if and only if
\begin{equation}\label{caractsubQ}
\begin{array}{l}
(s, 0) \in  \partial f(x_0, y_0)+\Big\{[A^\transp; B^\transp ] \lambda \;:\;\lambda \in \mathbb{R}^q\Big\}\\
\hspace*{1.2cm}+ \Big\{\displaystyle \sum_{i \in I(x_0, y_0)}\; \mu_i \partial g_i(x_0, y_0)\;:\;\mu_i \geq 0 \Big\}+\{0\}\small{\times}\mathcal{N}_{Y}(y_0),
\end{array}
\end{equation}
where $y_0$ is any element in the solution set \mbox{Sol}($x_0$) and with
$$I(x_0, y_0)=\Big\{i \in \{1,\ldots,p\} \;:\;g_i(x_0, y_0) =0\Big\}.$$

In particular, if $f$  and $g$ are differentiable, then 
$$
\partial \mathcal{Q}(x_0)=\Big\{  \nabla_x f(x_0, y_0)+ A^\transp \lambda + \sum_{i \in I(x_0, y_0)}\; \mu_i \nabla_x g_i(x_0, y_0)\;:\; (\lambda, \mu) \in \Lambda(x_0) \Big\}.
$$
\end{lemma}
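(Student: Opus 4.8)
The plan is to derive the subdifferential formula by rewriting $\mathcal{Q}$ as the value function of a perturbed convex program and applying standard convex duality, together with a Lagrangian reformulation that absorbs the constraint $y \in Y$. First I would introduce the perturbation function $\Phi(x,y) = f(x,y) + \mathbbm{1}_{Y}(y)$ and observe that $\mathcal{Q}(x) = \inf_y \{\Phi(x,y) : Ax+By = b,\ g(x,y)\le 0\}$; Assumption (H) together with the Slater-type condition guarantees that strong duality holds and that $\Lambda(x_0)$ is nonempty and compact, and that $\mathcal{Q}$ is convex, lsc, and finite on a neighborhood of $x_0$ (using part 3 of (H), the $\varepsilon$-fattening, to get $x_0$ in the interior of $\mathrm{dom}\,\mathcal{Q}$). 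The key identity to establish is
\[
\partial \mathcal{Q}(x_0) = \bigl\{\, \nabla_x L(x_0,y_0,\lambda,\mu) : (\lambda,\mu)\in\Lambda(x_0),\ y_0\in\mathrm{Sol}(x_0)\,\bigr\},
\]
where $L$ is the Lagrangian; this is the classical envelope/marginal-value theorem for convex programs (e.g.\ Rockafellar), and the first step is to quote or re-derive it in the nondifferentiable form.

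The heart of the argument is then to translate the condition ``$(\lambda,\mu)$ is dual optimal and $y_0$ is primal optimal'' into the subdifferential inclusion \eqref{caractsubQ}. I would argue that $(s,0)$ lies in the set on the right-hand side of \eqref{caractsubQ} if and only if there exist $\lambda$, $\mu_i\ge 0$ for $i\in I(x_0,y_0)$ (extended by $\mu_i=0$ off the active set, so complementary slackness holds automatically), and a normal vector $\nu\in\mathcal{N}_Y(y_0)$ such that
\[
(s,0) \in \partial f(x_0,y_0) + [A^\transp;B^\transp]\lambda + \sum_{i} \mu_i\,\partial g_i(x_0,y_0) + \{0\}\times\{\nu\}.
\]
By the sum rule for subdifferentials (valid here because the Slater condition puts a feasible point in the relative interior of the domains of $f$, the $g_i$, and $Y$), this is exactly the statement that $0\in\partial_{(x,y)}\bigl[L(\cdot,\cdot,\lambda,\mu) + \mathbbm{1}_Y(\cdot)\bigr](x_0,y_0) - (s,0)$, i.e.\ that $(x_0,y_0)$ minimizes $(x,y)\mapsto L(x,y,\lambda,\mu) + \mathbbm{1}_Y(y) - \langle s,x\rangle$. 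Unpacking the two coordinates: the $y$-coordinate equation says $y_0$ minimizes $y\mapsto L(x_0,y,\lambda,\mu)$ over $Y$, which (with complementary slackness from $\mu$) is equivalent to $(\lambda,\mu)\in\Lambda(x_0)$ and $y_0\in\mathrm{Sol}(x_0)$ by strong duality; the $x$-coordinate equation then says $s = \nabla_x L(x_0,y_0,\lambda,\mu)$ in the nonsmooth sense, which matches the marginal-value formula from the previous paragraph.

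The main obstacle I expect is the careful bookkeeping of constraint qualifications: one must verify that the Slater-type hypothesis is strong enough both (a) to make the subdifferential sum rule exact at $(x_0,y_0)$ — so that no closure operation is needed in \eqref{caractsubQ} — and (b) to guarantee $\Lambda(x_0)\neq\emptyset$ and the absence of a duality gap, so that the equivalence between dual optimality and the minimization characterization of $y_0$ is lossless. A secondary subtlety is that \eqref{caractsubQ} is asserted to hold for \emph{any} $y_0\in\mathrm{Sol}(x_0)$: I would note that the set on the right-hand side is in fact independent of the choice of $y_0$, because for a fixed dual optimal $(\lambda,\mu)$ every primal optimal $y_0$ gives the same subgradient $\nabla_x L(x_0,y_0,\lambda,\mu)$ of $\mathcal{Q}$ at $x_0$ (both equal the unique $s$), so ranging over all $(\lambda,\mu)\in\Lambda(x_0)$ for one fixed $y_0$ already sweeps out all of $\partial\mathcal{Q}(x_0)$. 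The differentiable special case is then immediate: $\partial f(x_0,y_0) = \{\nabla f(x_0,y_0)\}$, each $\partial g_i$ is a singleton, the $y$-coordinate equation becomes the stationarity condition defining $\Lambda(x_0)$, and the $\{0\}\times\mathcal{N}_Y(y_0)$ term is absorbed, leaving exactly the stated formula.
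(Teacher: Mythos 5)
The paper does not actually prove this lemma: its ``proof'' is a one-line citation to Lemma 2.1 of \cite{guigues2014cvsddp}, so there is no in-paper argument to compare yours against. That said, your proposal follows the standard (and, as far as one can tell, the intended) route: identify $s\in\partial\mathcal{Q}(x_0)$ with the statement that $(x_0,y_0)$ minimizes the tilted joint problem $(x,y)\mapsto f(x,y)-\langle s,x\rangle$ over the joint feasible set, then apply the exact subdifferential sum rule and the KKT/saddle-point characterization under the Slater condition to obtain \eqref{caractsubQ}, with the differentiable case falling out as a corollary. Two points of your sketch deserve tightening. First, the ``unpacking into two coordinates'' is looser than it looks: \eqref{caractsubQ} involves the \emph{joint} subdifferentials $\partial f(x_0,y_0)$ and $\partial g_i(x_0,y_0)$, which are not products of partial subdifferentials, so you cannot literally read off a $y$-equation ``$y_0$ minimizes $L(x_0,\cdot,\lambda,\mu)$ over $Y$'' and an independent $x$-equation; the clean statement is the single joint optimality condition $(s,0)\in\partial\big(f+\delta_{\{Ax+By=b\}}+\delta_{\{g\le 0\}}+\delta_{\mathbb{R}^m\times Y}\big)(x_0,y_0)$, combined with the infimal-projection rule $\partial\mathcal{Q}(x_0)=\{s:(s,0)\in\partial\Phi(x_0,y_0)\}$, which also disposes of the independence from the choice of $y_0$. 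Second, your claim that (H)-3 places $x_0$ in the interior of $\mathrm{dom}\,\mathcal{Q}$ conflates $\mathrm{dom}(f)$ with feasibility of $S(x)$ for $x$ near $x_0$; the lemma only assumes $S(x_0)\neq\emptyset$, and the argument should not rely on $\mathcal{Q}$ being finite on a neighborhood. Neither issue is fatal, but both need to be handled explicitly for the proof to close.
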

\begin{proof}
See \cite{guigues2014cvsddp}. \hfill
\end{proof}

\section{Problem formulation} \label{pbform}

Consider the convex optimization problem
\begin{equation}\label{optconvexd}
\left\{
\begin{array}{l}
\displaystyle \inf_{x_1,\ldots,x_T}\;\sum_{t=1}^{T} f_t(x_{t}, x_{t-1}) \\
x_t \in \mathcal{X}_t,\;g_t(x_t, x_{t-1}) \leq 0,\;\;\displaystyle A_{t} x_{t} + B_{t} x_{t-1} = b_t, t=1,\ldots,T,
\end{array}
\right.
\end{equation}
for $x_0$ given and the corresponding dynamic programming equations
\begin{equation}\label{dpconvexd}
\mathcal{Q}_{t}(x_{t-1})=
\left\{
\begin{array}{l}
\displaystyle \inf_{x_t}\;F_t(x_{t}, x_{t-1}):=f_t(x_{t}, x_{t-1}) + \mathcal{Q}_{t+1}(x_{t})\\
x_t \in \mathcal{X}_t,\;g_t(x_t, x_{t-1}) \leq 0,\;\;\displaystyle A_{t} x_{t} + B_{t} x_{t-1} = b_t,
\end{array}
\right.
\end{equation}
for $t=1,\ldots,T$, with $\mathcal{Q}_{T+1} \equiv 0$, and $g_t(x_t, x_{t-1})=(g_{t, 1}(x_t, x_{t-1}),\ldots,g_{t, p}(x_t, x_{t-1}))$ with
$g_{t, i}: \mathbb{R}^n \small{\times} \mathbb{R}^n \rightarrow \mathbb{R} \cup \{+\infty\}$.

Cost-to-go function $\mathcal{Q}_{t+1}(x_{t})$ represents the optimal (minimal)
total cost for time steps $t+1, \ldots,T$, starting from state $x_t$ at the beginning of step $t+1$.

We make the following assumptions (H1):\\

\par (H1) Setting $\mathcal{X}_{t}^\varepsilon := \mathcal{X}_{t} + \varepsilon  \mathbb{B}_n$, for $t=1,\ldots,T,$
\begin{itemize}
\item[(a)] $\mathcal{X}_{t} \subset \mathbb{R}^n$ is nonempty, convex, and compact.
\item[(b)] $f_t$ is proper, convex, and lower semicontinuous.
\item[(c)] setting $g_t(x_t, x_{t-1})=(g_{t, 1}(x_t, x_{t-1}), \ldots, g_{t, p}(x_t, x_{t-1}))$, for 
$i=1,\ldots,p$, the $i$-th component function
$g_{t, i}(x_t , x_{t-1})$ is a convex lower semicontinuous function.
\item[(d)] There exists $\varepsilon>0$ such that
$
\mathcal{X}_{t} {\small{\times}}  \mathcal{X}_{t-1}^{\varepsilon} \subset \mbox{dom}(f_t)
$
and for every $x_{t-1} \in \mathcal{X}_{t-1}^{\varepsilon}$, there exists 
$x_t \in \mathcal{X}_t$ such that $g_t(x_t, x_{t-1}) \leq 0$ and $\displaystyle A_{t} x_{t} + B_{t} x_{t-1} = b_t$.
\item[(e)] If $t \geq 2$, there exists
$$
{\bar x}_{t}=({\bar x}_{t, t}, {\bar x}_{t, t-1}) \in 
\mbox{ri}(\mathcal{X}_{t}) \small{\times} \mathcal{X}_{t-1}
\cap \mbox{ri}(\{g_t \leq 0\})$$ such that $\bar x_{t, t} \in \mathcal{X}_t$,
$g_t(\bar x_{t, t}, \bar x_{t, t-1}) \leq 0$ and $A_{t} \bar x_{t, t} + B_{t} \bar x_{t, t-1} = b_t$.\\
\end{itemize}

\par {\textbf{Comments on the assumptions.}} Assumptions (H1)-(a), (H1)-(b), and (H1)-(c)
ensure that the cost-to-go functions $\mathcal{Q}_t, t=2,\ldots,T$ are convex.

Assumption (H1)-(d) guarantees that $\mathcal{Q}_t$ is finite on $\mathcal{X}_{t-1}^{\varepsilon}$
and has bounded subgradients on $\mathcal{X}_{t-1}$.
It also ensures that the cut coefficients are finite and therefore that the
lower piecewise affine approximations computed for $\mathcal{Q}_t$
by the DDP algorithm are convex and Lipschitz continuous on $\mathcal{X}_{t-1}^{\varepsilon}$
(see Lemmas \ref{contqt} and \ref{contqtk} below and \cite{guigues2014cvsddp} for a proof).

Assumption (H1)-(e) is used to obtain an explicit description of the subdifferentials of
the value functions (see Lemma \ref{dervaluefunction}). This description is necessary to obtain the coefficients of the cuts.

\section{Algorithm} \label{algorithmddp}

\subsection{DDP without cut selection}\label{ddpwithout}

The Dual Dynamic Programming (DDP) algorithm to be presented in this section 
is a decomposition method for solving problems of form \eqref{optconvexd}.
It relies on the convexity of recourse functions $\mathcal{Q}_t, t=1,\ldots,T+1$:
\begin{lemma}\label{contqt} 
Consider the optimization problem \eqref{optconvexd}
and recourse functions $\mathcal{Q}_t, t=1,\ldots,T+1$, given by 
\eqref{dpconvexd}. 
Let Assumptions (H1)-(a), (H1)-(b), (H1)-(c), and (H1)-(d) hold. Then
for $t=1,\ldots,T+1$, $\mathcal{Q}_t$ is convex,
finite on $\mathcal{X}_{t-1}^{\varepsilon}$,
and continuous on $\mathcal{X}_{t-1}$.
\end{lemma}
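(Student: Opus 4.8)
The plan is to argue by backward induction on $t$, from $t=T+1$ down to $t=1$. For $t=T+1$ there is nothing to prove, since $\mathcal{Q}_{T+1}\equiv 0$ is convex, finite, and continuous on all of $\mathbb{R}^n$. So I fix $t\in\{1,\ldots,T\}$ and assume, as induction hypothesis, that $\mathcal{Q}_{t+1}$ is convex, finite on $\mathcal{X}_{t}^{\varepsilon}$, and continuous on $\mathcal{X}_{t}$; I then deduce the same three properties for $\mathcal{Q}_t$. Throughout I will view $\mathcal{Q}_t$ as a partial minimization: writing $F_t(x_t,x_{t-1})=f_t(x_t,x_{t-1})+\mathcal{Q}_{t+1}(x_t)$ and
$$C_t:=\{(x_t,x_{t-1}):x_t\in\mathcal{X}_t,\ g_t(x_t,x_{t-1})\le 0,\ A_tx_t+B_tx_{t-1}=b_t\},$$
we have $\mathcal{Q}_t(x_{t-1})=\inf\{F_t(x_t,x_{t-1}):(x_t,x_{t-1})\in C_t\}$.

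For convexity, note that $F_t$ is jointly convex on $C_t$: $f_t$ is convex in $(x_t,x_{t-1})$ by (H1)-(b), and $x_t\mapsto\mathcal{Q}_{t+1}(x_t)$ is convex on $\mathcal{X}_t$ by the induction hypothesis, hence the sum is convex. The set $C_t$ is convex, being the intersection of $\mathcal{X}_t\times\mathbb{R}^n$ (convex by (H1)-(a)), the sublevel sets $\{g_{t,i}\le 0\}$ (convex by (H1)-(c)), and an affine subspace. Since the partial minimization of a jointly convex function over a convex set is a convex function of the remaining variable, $\mathcal{Q}_t$ is convex.

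For finiteness on $\mathcal{X}_{t-1}^{\varepsilon}$, fix $x_{t-1}\in\mathcal{X}_{t-1}^{\varepsilon}$. By (H1)-(d) there exists $x_t\in\mathcal{X}_t$ with $g_t(x_t,x_{t-1})\le 0$ and $A_tx_t+B_tx_{t-1}=b_t$, so the feasible slice $S_t(x_{t-1}):=\{x_t:(x_t,x_{t-1})\in C_t\}$ is nonempty; it is moreover a closed subset of the compact set $\mathcal{X}_t$ (the $g_{t,i}$ being lower semicontinuous), hence compact. For any such $x_t$ we have $(x_t,x_{t-1})\in\mathcal{X}_t\times\mathcal{X}_{t-1}^{\varepsilon}\subset\mbox{dom}(f_t)$ by (H1)-(d), and $\mathcal{Q}_{t+1}(x_t)$ is finite since $x_t\in\mathcal{X}_t\subset\mathcal{X}_t^{\varepsilon}$; thus $F_t(x_t,x_{t-1})<+\infty$ and $\mathcal{Q}_t(x_{t-1})<+\infty$. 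For the lower bound, observe that $x_t\mapsto F_t(x_t,x_{t-1})$ is lower semicontinuous on $\mathcal{X}_t$, since $f_t(\cdot,x_{t-1})$ is lower semicontinuous by (H1)-(b) and $\mathcal{Q}_{t+1}$ is continuous on $\mathcal{X}_t$ by the induction hypothesis; a lower semicontinuous function on the nonempty compact set $S_t(x_{t-1})$ attains its infimum, so $\mathcal{Q}_t(x_{t-1})>-\infty$. Hence $\mathcal{Q}_t$ is finite on $\mathcal{X}_{t-1}^{\varepsilon}$.

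Finally, $\mathcal{X}_{t-1}$ lies in the interior of $\mathcal{X}_{t-1}^{\varepsilon}=\mathcal{X}_{t-1}+\varepsilon\mathbb{B}_n$, and $\mathcal{Q}_t$ is convex and finite (real-valued) on this open set; since a convex function finite on an open set is locally Lipschitz, hence continuous, on that set, $\mathcal{Q}_t$ is continuous on $\mathcal{X}_{t-1}$. This closes the induction. I expect the only delicate point to be the argument that the infimum defining $\mathcal{Q}_t$ cannot equal $-\infty$: this is where compactness of $\mathcal{X}_t$ and lower semicontinuity of $F_t(\cdot,x_{t-1})$ — the latter resting precisely on the continuity of $\mathcal{Q}_{t+1}$ carried over from the previous induction step — are essential; everything else is a routine application of standard convex-analysis facts (compare \cite{guigues2014cvsddp}).
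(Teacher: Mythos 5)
Your proof is correct. Note that the paper itself gives no in-line argument for this lemma --- it simply defers to Proposition 3.1 of \cite{guigues2014cvsddp} --- so there is nothing internal to compare against; your backward induction (convexity by partial minimization of a jointly convex function over the convex feasible set, finiteness from (H1)-(d) for the upper bound and from lower semicontinuity plus compactness of the feasible slice for the lower bound, and continuity from local Lipschitz continuity of a finite convex function on the open set $\mathrm{int}(\mathcal{X}_{t-1}^{\varepsilon}) \supset \mathcal{X}_{t-1}$) is exactly the standard argument one expects the cited reference to contain, and each step is sound under (H1)-(a)--(d).
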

\begin{proof} 
See the proof of Proposition 3.1 in \cite{guigues2014cvsddp}.\hfill
\end{proof}

The DDP algorithm builds for each $t=2,\ldots,T+1$, a polyhedral lower approximation $\mathcal{Q}_t^k$ at iteration $k$
for $\mathcal{Q}_{t}$. We start with $\mathcal{Q}_{t}^0=-\infty$. At the beginning of iteration $k$, are available the lower polyhedral approximations
$$  
\mathcal{Q}_t^{k-1}(x_{t-1}) =\displaystyle \max_{j \in \mathcal{S}_t^{k-1}} \mathcal{C}_t^j \Big( x_{t-1}\Big) :=\theta_t^j + \langle \beta_t^{j}, x_{t-1} -  x_{t-1}^j\rangle,
$$
for $\mathcal{Q}_t$, $t=2,\ldots,T+1$, where $\mathcal{S}_t^{k-1}$ is a subset of $\{0,1,\ldots,k-1\}$, initialized
taking $\mathcal{S}_t^{0}=\{0\}$ and $\mathcal{C}_t^j$  is the cut computed at iteration $j$, as explained below.

Let ${\underline{\mathcal{Q}}}_{t}^k: \mathcal{X}_{t-1} \rightarrow \mathbb{R}$ be the function given by
$$
{\underline{\mathcal{Q}}}_{t}^k(x_{t-1} )=
\left\{
\begin{array}{l}
\displaystyle \inf_{x} \;F_t^{k-1}(x, x_{t-1}):=f_t(x, x_{t-1}) + \mathcal{Q}_{t+1}^{k-1}(x)\\
x \in \mathcal{X}_t,\;g_t(x, x_{t-1}) \leq 0,\;\;\displaystyle A_{t} x + B_{t} x_{t-1} = b_t.
\end{array}
\right.
$$
At iteration $k$, in a forward pass, for $t=1,\ldots,T$, we compute an optimal solution $x_t^k$ of
\begin{equation} \label{forward1}
{\underline{\mathcal{Q}}}_{t}^k(x_{t-1}^k )=
\left\{
\begin{array}{l}
\displaystyle \inf_{x} \;F_t^{k-1}(x, x_{t-1}^k):=f_t(x, x_{t-1}^k) + \mathcal{Q}_{t+1}^{k-1}(x)\\
x \in \mathcal{X}_t,\;g_t(x, x_{t-1}^k) \leq 0,\;\;\displaystyle A_{t} x + B_{t} x_{t-1}^k = b_t,
\end{array}
\right.
\end{equation}
starting from $x_0^{k}=x_0$ and knowing that $\mathcal{Q}_{T+1}^k=\mathcal{Q}_{T+1}\equiv 0$.
We have that $\mathcal{Q}_{t}^{k-1} \leq \mathcal{Q}_{t}$ for all $t$ and cut $\mathcal{C}_t^k$ is built for
$\mathcal{Q}_{t}$ in such a way that $\mathcal{Q}_{t}^{k} \leq \mathcal{Q}_{t}$ holds. For step $t$, since
$\mathcal{Q}_{t+1}^{k-1} \leq \mathcal{Q}_{t+1}$, we have $\mathcal{Q}_{t} \geq {\underline{\mathcal{Q}}}_{t}^{k}$. 
Setting $\theta_t^k= {\underline{\mathcal{Q}}}_{t}^k(x_{t-1}^k )$ and taking $\beta_t^k \in \partial {\underline{\mathcal{Q}}}_{t}^k(x_{t-1}^k)$,
it follows that
$$
\begin{array}{lll}
\mathcal{Q}_t(x_{t-1}) &\geq &{\underline{\mathcal{Q}}}_{t}^k(x_{t-1}) \geq  \mathcal{C}_t^k( x_{t-1} ) = \theta_t^k + \langle {\beta}_t^{k}, x_{t-1} -  x_{t-1}^k \rangle.
\end{array}
$$
Vector $\beta_t^k$ from the set $\partial {\underline{\mathcal{Q}}}_{t}^k(x_{t-1}^k)$ is computed using  Lemma \ref{dervaluefunction}. In the original description of DDP, the cut $\mathcal{C}_t^k$
is automatically added to the set of cuts that make up the approximation $\mathcal{Q}_t^k$, i.e., $\mathcal{S}_t^k=\{0,1,\ldots,k\}$.

\subsection{Cut selection methods}\label{csmethods}

\subsubsection{Level 1 and Territory algorithm}

We now describe the Territory algorithm, the Level 1 cut selection and its limited memory variant which
satisfy the following assumption:
\begin{itemize}
\item[(H2)] The height of the cutting plane approximations $\mathcal{Q}_t^{k}$ at the trial points $x_{t-1}^k$ is nondecreasing, i.e.,
for all $t=2,\ldots,T$, for all $k_1 \in \mathbb{N}^{*}$, for all $k_2 \geq k_1$, we have 
$\mathcal{Q}_t^{k_1} ( x_{t-1}^{k_1}  )\geq \mathcal{C}_t^{k_1} ( x_{t-1}^{k_1}  )$
and $\mathcal{Q}_t^{k_2} ( x_{t-1}^{j}  ) \geq \mathcal{Q}_t^{k_1} ( x_{t-1}^{j}  )$
for every $j=1,\ldots,k_2$.
\end{itemize}

To describe the Level 1 cut selection strategy, we introduce the set $I_{t, i}^k$ of cuts computed at iteration $k$ or before that have the largest value at 
$x_{t-1}^i$:
\begin{equation} \label{indexlevel1}
I_{t, i}^k = \operatorname*{arg\,max}_{\ell=1,\ldots,k} \mathcal{C}_t^{\ell}( x_{t-1}^i ).
\end{equation}
With this notation, with Level 1 cut selection strategy, the cuts that make up $\mathcal{Q}_t^k$ are the cuts
that have the largest value for at least one trial point, i.e., 
\begin{equation} \label{qtk}
\displaystyle \mathcal{Q}_t^k( x_{t-1} ) =  \max_{\ell \in \mathcal{S}_t^k} \, \mathcal{C}_t^\ell( x_{t-1}) \mbox { with }
\mathcal{S}_t^k = \displaystyle \bigcup_{i=1}^k I_{t, i}^{k}.
\end{equation}
For the Territory algorithm, $\mathcal{Q}_t^k$ and $\mathcal{S}_t^k$ are still given
by \eqref{qtk} but with $I_{t, i}^k$ now given by
\begin{equation} \label{indexlevel1}
I_{t, i}^k = \operatorname*{arg\,max}_{\ell \in \mathcal{S}_t^{k-1} \cup \{k\}} \mathcal{C}_t^{\ell}( x_{t-1}^i ),
\end{equation}
starting from $\mathcal{S}_t^{0}=\{0\}$. 

\subsubsection{Limited memory Level 1}

Note that if several cuts are the highest at the same trial point and in particular if they are identical
(which can happen in practice, see the numerical experiments of Section \ref{numsim}) then Level 1 will select all of them.
This leads us to propose a limited memory variant of this cut selection method.\\
\par {\textbf{Limited memory Level 1 cut selection}}: with the notation of the previous section, at iteration $k$, after computing the
cut for $\mathcal{Q}_t$, we store in $I_{t, i}^k,\,i=1,\ldots,k$, the index of {\textbf{only one}} of the highest cuts 
at $x_{t-1}^i$, namely among the cuts computed up to iteration $k$ that have the highest value at $x_{t-1}^i$,
we store the oldest cut, i.e., the cut that was first computed among the cuts that have the highest value at that point.

The pseudo-code for selecting the cuts for $\mathcal{Q}_t$ at iteration $k$ using this limited
memory variant of Level 1 is given in Figure \ref{figurecut1}.
In this pseudo-code, we use 
the notation $I_{t, i}$ in place of $I_{t, i}^k$. We also store in variable
$m_{t, i}$ the current value of the highest cut for $\mathcal{Q}_t$
at $x_{t-1}^{i}$. At the end of the first iteration, we initialize
$m_{t, 1} = \mathcal{C}_t^1( x_{t-1}^{1})$. After cut $\mathcal{C}_t^k$ is computed at 
iteration $k \geq 2$, these variables are updated using the relations
$$   
\left\{
\begin{array}{lll}
m_{t, i} &  \leftarrow & \max\Big(m_{t, i}, \mathcal{C}_t^k( x_{t-1}^{i})\Big),\;i=1,\ldots,k-1,\\
m_{t, k} &  \leftarrow & \max\Big(\mathcal{C}_t^j( x_{t-1}^{k}), j=1,\ldots,k \Big).
\end{array}
\right. 
$$
Finally, we use an array {\tt{Selected}} of Boolean using the information given by variables $I_{t, i}$
whose $i$-th entry is {\tt{True}} if cut $i$ is selected and {\tt{False}} otherwise. 
The index set $\mathcal{S}_t^k$ is updated correspondingly. Though all cuts are stored, only the selected cuts are used in the problems solved in the forward pass.\\

\begin{figure}
\rule{\linewidth}{1pt}\\
\flushleft
\begin{tabular}{l}
$I_{t, k}=\{1\}$, $m_{t, k}=\mathcal{C}_t^1( x_{t-1}^k )$.\\
{\textbf{For}} $\ell=1,\ldots,k-1$,\\
\hspace*{0.4cm}{\textbf{If }}$\mathcal{C}_t^k( x_{t-1}^{\ell} ) > m_{t, \ell}$ {\textbf{then }}$I_{t, \ell}=\{k\},\; m_{t, \ell}=\mathcal{C}_t^k( x_{t-1}^{\ell} )${\;\textbf{End If}}\\
\hspace*{0.4cm}{\textbf{If }}$\mathcal{C}_t^{\ell + 1}( x_{t-1}^{k} ) > m_{t, k}$ {\textbf{then }}$I_{t, k}=\{\ell + 1\},\; m_{t, k}=\mathcal{C}_t^{\ell + 1}( x_{t-1}^{k} )${\;\textbf{End If}}\\
{\textbf{End For}}\\
{\textbf{For}} $\ell=1,\ldots,k$,\\
\hspace*{0.7cm}{\tt{Selected}}[$\ell$]={\tt{False}}\\
{\textbf{End For}}\\
{\textbf{For}} $\ell=1,\ldots,k$\\
\hspace*{0.4cm}{\textbf{For}} $j=1,\ldots,|I_{t, \ell}|$\\
\hspace*{1cm} {\tt{Selected}}[$I_{t, \ell}[j]$]={\tt{True}}\\
\hspace*{0.4cm}{\textbf{End For}}\\
{\textbf{End For}}\\
$\mathcal{S}_t^k=\emptyset$\\
{\textbf{For}} $\ell=1,\ldots,k$\\
\hspace*{0.7cm}{\textbf{If}} {\tt{Selected}}[$\ell$]={\tt{True}} {\textbf{then}} $\mathcal{S}_t^k=\mathcal{S}_t^k \cup \{\ell\}$ {\textbf{End If}}\\
{\textbf{End For}}\\
\end{tabular}
\rule{\linewidth}{1pt}
\caption{Pseudo-code for limited memory Level 1 cut selection for DDP.}
\label{figurecut1}
\end{figure}

\if{
\begin{figure}
\begin{tabular}{l}
\input{Cut_Selection_SDDP.pstex_t}
\end{tabular}
\caption{A cut selection strategy for DDP (Level 1 or Territory algorithm). Red balls represent $\mathcal{C}_t^k( x_{t-1}^k )$.}
\label{figintro}
\end{figure}
This cut selection strategy is illustrated in Figure \ref{figintro}. 
In this figure, for the first three iterations, all the cuts have the largest value for at least
one trial point ($\mathcal{C}_t^1$ at $x_{t-1}^1$, $\mathcal{C}_t^2$ at $x_{t-1}^2$, and $\mathcal{C}_t^3$ at $x_{t-1}^3$,) so no cut is eliminated.
However, for the fourth iteration, the value of the cut $\mathcal{C}_t^2$
is strictly less than the value of the highest cut 
for all the trial points $x_{t-1}^1, x_{t-1}^2, x_{t-1}^3$, and $x_{t-1}^4$, so this cut is eliminated.
As pointed out in \cite{pfeifferetalcuts} and \cite{dpcuts0} and as we see on this example, a useful cut, i.e., a cut that is above all the others on some subset
can be eliminated (such is the case of cut $\mathcal{C}_t^2$).
Also, we do not necessarily have $\mathcal{Q}_t^k \geq \mathcal{Q}_t^j$ for every $j \in \mathcal{S}_t^k$.
 Indeed, on this example, $\mathcal{Q}_t^4 =\max(\mathcal{C}_t^1, \mathcal{C}_t^3, \mathcal{C}_t^4)$,
 $\mathcal{S}_t^4=\{1, 3, 4\}$,
 but we do not have $\mathcal{Q}_t^4  \geq \mathcal{Q}_t^3$ (on the set $\mathcal{S}$ represented in green in Figure \ref{figintro}).
 However, all redundant cuts are eliminated.
 
We now describe more precisely these algorithms in pseudo-code, starting with the
Level 1 cut selection  strategy.
For implementation purposes, it is convenient to introduce
variables $m_{t, j}$  storing the largest values of the cuts at the trial points:
these variables $m_{t, j}$ are defined for $t=2,\ldots,T$,
and $j=1,2,\ldots$ and $m_{t, j}$ is the current value of the highest cut for $\mathcal{Q}_t$
at trial point  $x_{t-1}^{j}$. At the end of the first iteration, we initialize
$m_{t, 1} = \mathcal{C}_t^1( x_{t-1}^{1})$. After cut $\mathcal{C}_t^k$ is computed at 
iteration $k \geq 2$, these variables are then updated using the relations
$$   
\left\{
\begin{array}{lll}
m_{t, j} &  \leftarrow & \max\Big(m_{t, j}, \mathcal{C}_t^k( x_{t-1}^{j})\Big),\;j=1,\ldots,k-1,\\
m_{t, k} &  = & \max\Big(\mathcal{C}_t^j( x_{t-1}^{k}), j=1,\ldots,k \Big),
\end{array}
\right. 
$$
We also introduce variables $I_{t, j}$ defined for $t=2,\ldots,T$,
and $j=1,2,\ldots$: $I_{t, j}$ is the list of cut indexes for $\mathcal{Q}_{t-1}$ which are the highest at $x_{t-1}^j$.
It is updated at each trial point where the last computed cut has the highest value.
Finally, we use an array {\tt{Selected}} of Boolean using the information given by variables $I_{t, j}$
whose $i$-th entry is {\tt{True}} if cut $i$ is selected and {\tt{False}} otherwise. 
The index set $\mathcal{S}_t^k$ is then updated correspondingly.
Summarizing, we obtain the algorithm given in Figure \ref{figurecut1} to update the cuts for $\mathcal{Q}_t$ at iteration $k$.
\begin{figure}
\rule{\linewidth}{1pt}\\
\flushleft
\begin{tabular}{l}
$I_{t, k}=\{1\}$, $m_{t, k}=\mathcal{C}_t^1( x_{t-1}^k )$.\\
{\textbf{For}} $\ell=1,\ldots,k-1$,\\
\hspace*{0.4cm}{\textbf{If }}$\mathcal{C}_t^k( x_{t-1}^{\ell} ) = m_{t, \ell}$ {\textbf{then }}$I_{t, \ell}=I_{t, \ell} \cup \{k\}$\\
\hspace*{0.4cm}{\textbf{Else if }}$\mathcal{C}_t^k( x_{t-1}^{\ell} ) > m_{t, \ell}$ {\textbf{then }}$I_{t, \ell}=\{k\},\; m_{t, \ell}=\mathcal{C}_t^k( x_{t-1}^{\ell} )$\\
\hspace*{0.4cm}{\textbf{End If}}\\
\hspace*{0.4cm}{\textbf{If }}$\mathcal{C}_t^{\ell + 1}( x_{t-1}^{k} ) = m_{t, k}$ {\textbf{then }}$I_{t, k}=I_{t, k} \cup \{\ell + 1\}$\\
\hspace*{0.4cm}{\textbf{Else if }}$\mathcal{C}_t^{\ell + 1}( x_{t-1}^{k} ) > m_{t, k}$ {\textbf{then }}$I_{t, k}=\{\ell + 1\},\; m_{t, k}=\mathcal{C}_t^{\ell + 1}( x_{t-1}^{k} )$\\
\hspace*{0.4cm}{\textbf{End If}}\\
{\textbf{End For}}\\
{\textbf{For}} $\ell=1,\ldots,k$,\\
\hspace*{0.7cm}{\tt{Selected}}[$\ell$]={\tt{False}}\\
{\textbf{End For}}\\
{\textbf{For}} $\ell=1,\ldots,k$\\
\hspace*{0.4cm}{\textbf{For}} $j=1,\ldots,|I_{t, \ell}|$\\
\hspace*{1cm} {\tt{Selected}}[$I_{t, \ell}[j]$]={\tt{True}}\\
\hspace*{0.4cm}{\textbf{End For}}\\
{\textbf{End For}}\\
$\mathcal{S}_t^k=\emptyset$\\
{\textbf{For}} $\ell=1,\ldots,k$\\
\hspace*{0.7cm}{\textbf{If}} {\tt{Selected}}[$\ell$]={\tt{True}} {\textbf{then}} $\mathcal{S}_t^k=\mathcal{S}_t^k \cup \{\ell\}$ {\textbf{End If}}\\
{\textbf{End For}}\\
\end{tabular}
\rule{\linewidth}{1pt}
\caption{Level 1 cut selection for iteration $k$ of DDP.}
\label{figurecut1}
\end{figure}
\par For the Territory algorithm, we use an additional variable {\tt{Nb\_Cuts}} which corresponds,
for a given step, to the number of current cuts for the approximate cost-to-function of that step.
Since the nonselected cuts are pruned, the set of stored cuts needs to be updated but 
the index set $\mathcal{S}_t^k$ is always $\mathcal{S}_t^k=\{1,2,\ldots,{\tt{Nb\_Cuts}}\}$ (all stored cuts are relevant).
The cut pruning algorithm for the Territory algorithm is given in Figure \ref{figurecut2}.\\
\begin{figure}[htbp]
\rule{\linewidth}{1pt}
\flushleft
\begin{tabular}{l}
{\tt{Nb\_Cuts}}={\tt{Nb\_Cuts}}+1. \hspace*{0.3cm}{\tt{//New cut $\mathcal{C}_t^k$ has just been computed}}\\
$\mathcal{C}_t^{{\tt{Nb\_Cuts}}} \leftarrow \mathcal{C}_t^k$.\\
$I_{t, k}=\{1\}$, $m_{t, k}=\mathcal{C}_t^1( x_{t-1}^{k} )$.\\
{\textbf{For}} $\ell=1,\ldots,k-1$,\\
\hspace*{0.4cm}{\textbf{If }}$\mathcal{C}_t^{\tt{Nb\_Cuts}}( x_{t-1}^{\ell} ) = m_{t, \ell}$ {\textbf{then }}$I_{t, \ell}=I_{t, \ell} \cup \{{\tt{Nb\_Cuts}}\}$\\
\hspace*{0.4cm}{\textbf{Else if }}$\mathcal{C}_t^{\tt{Nb\_Cuts}}( x_{t-1}^{\ell} ) > m_{t, \ell}$ {\textbf{then }}$I_{t, \ell}=\{{\tt{Nb\_Cuts}}\},\; m_{t, \ell}=\mathcal{C}_t^{\tt{Nb\_Cuts}}( x_{t-1}^{\ell} )$\\
\hspace*{0.4cm}{\textbf{End If}}\\
{\textbf{End For}}\\
{\textbf{For}} $\ell=1,\ldots,{\tt{Nb\_Cuts}}-1$,\\
\hspace*{0.4cm}{\textbf{If }}$\mathcal{C}_t^{\ell + 1}( x_{t-1}^{k} ) = m_{t, k}$ {\textbf{then }}$I_{t, k}=I_{t, k} \cup \{\ell + 1\}$\\
\hspace*{0.4cm}{\textbf{Else if }}$\mathcal{C}_t^{\ell + 1}( x_{t-1}^{k} ) > m_{t, k}$ {\textbf{then }}\\
\hspace*{0.8cm}$I_{t, k}=\{\ell + 1\},\; m_{t, k}=\mathcal{C}_t^{\ell + 1}( x_{t-1}^{k} )$\\
\hspace*{0.4cm}{\textbf{End If}}\\
{\textbf{End For}}\\
{\textbf{For}} $\ell=1,\ldots,{\tt{Nb\_Cuts}}$\\
\hspace*{0.7cm}{\tt{Selected}}[$\ell$]={\tt{False}}\\
{\textbf{End For}}\\
{\textbf{For}} $\ell=1,\ldots,k$\\
\hspace*{0.4cm}{\textbf{For}} $j=1,\ldots,|I_{t, \ell}|$\\
\hspace*{1cm} {\tt{Selected}}[$I_{t, \ell}[j]$]={\tt{True}}\\
\hspace*{0.4cm}{\textbf{End For}}\\
{\textbf{End For}}\\
$\mathcal{S}_t^k=\emptyset$.\\
{\tt{Count}}=1. \hspace*{0.2cm}{\tt{//Variable Count stores the number of relevant cuts}} \\
{\textbf{For}} $\ell=1,\ldots,{\tt{Nb\_Cuts}}$\\
\hspace*{0.4cm}{\textbf{If}} {\tt{Selected}}[$\ell$]={\tt{True}} {\textbf{then}}\\
\hspace*{0.8cm}$\mathcal{S}_t^k=\mathcal{S}_t^k \cup \{ {\tt{Count}} \}$\\
\hspace*{0.8cm}$\mathcal{C}_t^{{\tt{Count}}} \leftarrow \mathcal{C}_t^{\ell}$\hspace*{0.2cm}{\tt{//Cuts are re-ordered, nonrelevant cuts are pruned}}\\
\hspace*{0.8cm}{\textbf{For}} $i=1,\ldots,k$\\
\hspace*{1.2cm}{\textbf{For}} $j=1,\ldots,|I_{t, i}|$\\
\hspace*{1.9cm}{\textbf{If}} $I_{t, i}[j]= \ell$ {\textbf{then}} $I_{t, i}[j]={\tt{Count}}$ {\textbf{End If}}\\
\hspace*{1.2cm}{\textbf{End For}}\\
\hspace*{0.8cm}{\textbf{End For}}\\
\hspace*{0.8cm}{\tt{Count}}={\tt{Count}}+1 \\
\hspace*{0.4cm}{\textbf{End If}}\\
{\textbf{End For}}\\
{\tt{Nb\_Cuts}}={\tt{Count}}-1.
\end{tabular}
\rule{\linewidth}{1pt}
\caption{Territory algorithm for iteration $k$ of DDP.}
\label{figurecut2}
\end{figure}
}\fi

\par {\textbf{Discussion.} With these cut selection strategies, we do not have anymore $\mathcal{Q}_{t}^{k} \geq \mathcal{Q}_{t}^{k-1}$ for all iterations $k$.
However, the relation $\mathcal{Q}_{t}^{k} \geq \mathcal{Q}_{t}^{k-1}$ holds for iterations $k$ such that,
at the previous trial points $x_{t-1}^\ell, \ell=1,\ldots,k-1$,
the value of the new cut $\mathcal{C}_t^k$ is below the value of
$\mathcal{Q}_{t}^{k-1}$: 
\begin{equation}\label{condcroiss}
\mathcal{C}_t^k( x_{t-1}^\ell ) \leq \mathcal{Q}_{t}^{k-1}(x_{t-1}^\ell), \ell=1,\ldots,k-1.
\end{equation}
\if{
In this case, for the Territory algorithm, two situations, represented in Figure \ref{figpartcaseqt}, can happen:
\begin{itemize}
\item[(i)] one of the inequalities \eqref{condcroiss} is an equality or
$\mathcal{C}_t^k( x_{t-1}^k ) \geq \mathcal{Q}_{t}^{k-1}(x_{t-1}^k)$. In this case, $\mathcal{Q}_t^{k}= \max(\mathcal{Q}_t^{k-1}, \mathcal{C}_t^k)$
(top right figure in Figure \ref{figpartcaseqt}).
\item[(ii)] $\mathcal{C}_t^k( x_{t-1}^k )< \mathcal{Q}_{t}^{k-1}(x_{t-1}^k)$ and all the inequalities \eqref{condcroiss} are strict, in which case
$\mathcal{Q}_t^{k}= \mathcal{Q}_t^{k-1}$ (top left figure in Figure \ref{figpartcaseqt}).
\end{itemize}
For the Level 1 cut selection strategy, if \eqref{condcroiss} holds, then:
\begin{itemize}
\item[(a)] if $\mathcal{C}_t^k( x_{t-1}^k )< \mathcal{Q}_{t}^{k-1}(x_{t-1}^k)$,
if all inequalities \eqref{condcroiss} are strict, and if 
$\mathcal{C}_t^j( x_{t-1}^k )< \mathcal{Q}_{t}^{k-1}(x_{t-1}^k)$ for all $j \in \{1,2,\ldots,k-1\} \backslash S_t^{k-1}$,
then $\mathcal{Q}_t^{k}= \mathcal{Q}_t^{k-1}$ (middle left figure in Figure \ref{figpartcaseqt}).
\item[(b)] If one of the inequalities \eqref{condcroiss} is an equality or 
if ($k\in I_{t, k}^k$ and $I_{t, k}^k \subseteq \{k\} \cup  \mathcal{S}_{t}^{k-1}$) then 
$\mathcal{Q}_t^{k}= \max(\mathcal{Q}_t^{k-1}, \mathcal{C}_t^k)$ (middle right figure in Figure \ref{figpartcaseqt}).
\item[(c)] If all inequalities \eqref{condcroiss} are strict and
$I_{t, k}^k \subseteq \{1,2,\ldots,k-1\} \backslash S_t^{k-1}$ then
$\mathcal{Q}_t^{k}= \max(\mathcal{Q}_t^{k-1}, \displaystyle \max_{j \in I_{t, k}^{k}} \mathcal{C}_t^j )$
(bottom left figure in Figure \ref{figpartcaseqt}).
\item[(d)] If (one of the inequalities \eqref{condcroiss} is an equality or $k\in I_{t, k}^k$) and 
($I_{t, k}^{k} \backslash \{k\} \cap \mathcal{S}_t^{k-1} \neq \emptyset$) then 
$I_{t, k}^{k} \backslash \{k\} \neq \emptyset$,
$\mathcal{Q}_t^{k}= \max(\mathcal{Q}_t^{k-1}, \mathcal{C}_t^k, \displaystyle \max_{j \in I_{t, k}^{k} \backslash \{k\}} \mathcal{C}_t^j )$
(bottom right figure in Figure \ref{figpartcaseqt}).
\end{itemize}
\begin{figure}
\begin{tabular}{ll}
\input{NewCut0T.pstex_t} & \input{NewCutkT.pstex_t} \\
\input{NewCut0.pstex_t} & \input{NewCutkLev1.pstex_t}\\
\input{NewCutjLev1.pstex_t} & \input{NewCutjkLev1.pstex_t}
\end{tabular}
\caption{Six cases where $\mathcal{Q}_t^{k} \geq  \mathcal{Q}_t^{k-1}$ (it is assumed that relations \eqref{condcroiss} hold).}
\label{figpartcaseqt}
\end{figure}
}\fi
For the Territory algorithm, Level 1, and its limited memory variant, we have that
\begin{equation}\label{ineq1}
\mathcal{Q}_t^k ( x_{t-1}^k  ) \geq  \theta_t^k = \mathcal{C}_t^k ( x_{t-1}^k  ),\;\; \forall t=2,\ldots,T, \;\forall k \in \mathbb{N}^{*},
\end{equation}
and by definition of the cut selection strategies we have for $k_2 \geq k_1$ and $j=1,\ldots,k_2$, that
$$
\mathcal{Q}_t^{k_2} ( x_{t-1}^{j}  ) = \displaystyle \max_{\ell=1,\ldots,k_2} \mathcal{C}_t^{\ell} \Big( x_{t-1}^{j} \Big),
$$
which implies that for all $k_2 \geq k_1$ and for $j=1,\ldots,k_2$,
\begin{equation}\label{ineq2}
\mathcal{Q}_t^{k_2} ( x_{t-1}^{j}  ) \geq \displaystyle \max_{\ell=1,\ldots,k_1} \mathcal{C}_t^{\ell} \Big( x_{t-1}^{j} \Big) = \mathcal{Q}_t^{k_1} ( x_{t-1}^{j}  ),
\end{equation}
i.e., Assumption (H2) is satisfied.
\begin{rem} Relation \eqref{ineq1} means that whenever a new trial point is generated, the value of the approximate cost-to-go function 
at this point is a better (larger) approximation of the value of the cost-to-go function at this point
than the value of the last computed cut at this point. Relations \eqref{ineq2} show
that at the trial points, the approximations of the cost-to-go functions tend to be better along the iterations, i.e., for every $k$
the sequence $\mathcal{Q}_t^{j} ( x_{t-1}^{k}  )_{j \geq k-1}$ is nondecreasing.
\end{rem}

\section{Convergence analysis for nonlinear problems} \label{convanalysis}

To proceed, we need the following lemma:
\begin{lemma}\label{contqtk}
Consider optimization problem \eqref{optconvexd},
recourse functions $\mathcal{Q}_t, t=1,\ldots,T+1$, given by 
\eqref{dpconvexd}, and let Assumption (H1) hold. 
To solve that problem, consider the DDP algorithm with cut selection presented in the previous
section.
Then
for $t=2,\ldots,T+1$ and all $k \in \mathbb{N}$, $\mathcal{Q}_t^k$ is convex
and for all $k \geq T-t+1$, $\mathcal{Q}_t^k$, is Lipschitz continuous on 
$\mathcal{X}_{t-1}^{\varepsilon}$, and coefficients $\theta_t^k$ and $\beta_t^k$ are bounded.
\end{lemma}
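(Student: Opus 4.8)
The plan is to prove convexity first, then establish the Lipschitz bound, proceeding by backward induction on $t$ combined with a handling of the iteration index $k$. First I would observe that convexity of $\mathcal{Q}_t^k$ is immediate and needs no induction: by construction $\mathcal{Q}_t^k(x_{t-1}) = \max_{\ell \in \mathcal{S}_t^k} \mathcal{C}_t^\ell(x_{t-1})$ is a pointwise maximum of affine functions (cuts), and the maximum of a finite family of affine functions is convex; when $\mathcal{S}_t^k=\{0\}$ we have $\mathcal{Q}_t^k \equiv -\infty$ which is also (vacuously) convex. So the substance is the Lipschitz statement and the boundedness of $\theta_t^k$ and $\beta_t^k$.

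For the Lipschitz part I would argue by backward induction on $t$, starting from $t=T+1$ where $\mathcal{Q}_{T+1}^k = \mathcal{Q}_{T+1} \equiv 0$ is trivially Lipschitz for every $k\ge 0 = T-(T+1)+1$. For the inductive step, suppose $\mathcal{Q}_{t+1}^k$ is Lipschitz on $\mathcal{X}_t^\varepsilon$ for all $k \ge T-t$. Fix $k \ge T-t+1$; then $k-1 \ge T-t$, so $\mathcal{Q}_{t+1}^{k-1}$ is Lipschitz on $\mathcal{X}_t^\varepsilon$ by the induction hypothesis. The key observation is that the cut $\mathcal{C}_t^k$ is built from the value function $\underline{\mathcal{Q}}_t^k$ of the subproblem \eqref{forward1}, whose objective $F_t^{k-1}(x,x_{t-1}) = f_t(x,x_{t-1}) + \mathcal{Q}_{t+1}^{k-1}(x)$ now satisfies the hypotheses of Lemma \ref{dervaluefunction} (and of the argument behind Lemma \ref{contqt}): $f_t$ is proper convex lsc by (H1)-(b), $\mathcal{Q}_{t+1}^{k-1}$ is finite convex (and Lipschitz, hence continuous) on $\mathcal{X}_t^\varepsilon$ by the induction hypothesis, so the sum is proper convex lsc, with $\mathcal{X}_t^\varepsilon \times \mathcal{X}_{t-1}^\varepsilon$ contained in its domain, using (H1)-(d); the feasible-set conditions (H1)-(d),(e) supply the nonemptiness of $S(x_{t-1})$ for $x_{t-1}\in\mathcal{X}_{t-1}^\varepsilon$ and the Slater-type condition. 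Hence Lemma \ref{dervaluefunction} applies, $\partial \underline{\mathcal{Q}}_t^k(x_{t-1})\neq\emptyset$ on $\mathcal{X}_{t-1}$, and — exactly as in the proof of Lemma \ref{contqt} / Proposition 3.1 in \cite{guigues2014cvsddp} — the subgradients $\beta_t^k \in \partial \underline{\mathcal{Q}}_t^k(x_{t-1}^k)$ are bounded by a constant depending only on the data of stage $t$ and a Lipschitz constant of $\mathcal{Q}_{t+1}^{k-1}$ on $\mathcal{X}_t^\varepsilon$; compactness of $\mathcal{X}_{t-1}$ then bounds $\theta_t^k = \underline{\mathcal{Q}}_t^k(x_{t-1}^k)$ as well. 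Since $\mathcal{Q}_t^k$ is a maximum of cuts each of the form $\theta_t^j + \langle \beta_t^j, \cdot - x_{t-1}^j\rangle$ with $\|\beta_t^j\|$ uniformly bounded and $x_{t-1}^j \in \mathcal{X}_{t-1}$ compact, $\mathcal{Q}_t^k$ is Lipschitz on $\mathcal{X}_{t-1}^\varepsilon$ with a constant independent of $k$.

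The one point requiring care — and the main obstacle — is making the induction hypothesis match the index shift: to bound $\beta_t^k$ at iteration $k$ we need $\mathcal{Q}_{t+1}^{k-1}$ to already be Lipschitz, which is why the statement is phrased with the threshold $k \ge T-t+1$; one must check the arithmetic $k \ge T-t+1 \Rightarrow k-1 \ge T-(t+1)+1$ lines up with the inductive hypothesis at stage $t+1$, and that the base case $t=T+1$ uses $\mathcal{Q}_{T+1}^k\equiv 0$ for all $k\ge 0$. A secondary subtlety is that, because of cut selection, $\mathcal{Q}_{t+1}^{k-1}$ might be $-\infty$ if $\mathcal{S}_{t+1}^{k-1}=\{0\}$; but once $k-1 \ge T-t$, at least one genuine cut has been added to $\mathcal{Q}_{t+1}$ in the forward passes (a cut at each of the first $T-t$ iterations eventually propagates down), so $\mathcal{Q}_{t+1}^{k-1}$ is a genuine finite-valued piecewise affine function and the domain condition needed to apply Lemma \ref{dervaluefunction} is satisfied. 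Finally I would note that all constants obtained are independent of $k$, which is what is needed for the convergence analysis in the remainder of Section \ref{convanalysis}. I expect the only real work beyond this is quoting the quantitative subgradient bound from \cite{guigues2014cvsddp}; everything else is bookkeeping over the indices $t$ and $k$.
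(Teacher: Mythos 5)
Your argument is correct and is essentially the one the paper relies on: the paper itself gives no proof here but defers to Lemma 3.2 of \cite{guigues2014cvsddp}, whose standard backward-induction argument (convexity as a finite max of cuts, finiteness of $\underline{\mathcal{Q}}_t^k$ on the $\varepsilon$-fattened set via (H1)-(d) giving bounded subgradients and hence bounded cut coefficients, with the threshold $k\ge T-t+1$ accounting for the backward propagation of finite cuts from stage $T+1$) is exactly what you reconstruct. Your handling of the index shift and of the initially infinite approximations is the right bookkeeping, so nothing further is needed.
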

\begin{proof} See the proof of Lemma 3.2 in \cite{guigues2014cvsddp}.\hfill
\end{proof}

The convergence of DDP with cut selection for nonlinear programs is given in Theorem \ref{convddpth}
below. In particular, introducing at iteration $k$ the approximation ${\underline{\mathcal{Q}}}_k = \sup_{n \leq k} {\underline{\mathcal{Q}}}_1^n( x_0 )$
(which can be computed at iteration $k$) of $\mathcal{Q}_1( x_0)$, we show that the whole sequence ${\underline{\mathcal{Q}}}_k$
converges to $\mathcal{Q}_1( x_0 )$. This proof relies on the following properties:
\begin{enumerate}
\item decisions belong to compact sets ((H1)-(a));
\item for $t=1,\ldots,T+1$, $\mathcal{Q}_t$ is convex and continuous on $\mathcal{X}_{t-1}$
(Lemma \ref{contqt});
\item for all $t=2,\ldots,T+1$ and all $k \in \mathbb{N}$, $\mathcal{Q}_t^k$ is convex
and for all $k \geq T-t+1$, $\mathcal{Q}_t^k$, is Lipschitz continuous on 
$\mathcal{X}_{t-1}$, and coefficients $\theta_t^k$ and $\beta_t^k$ are bounded (Lemma \ref{contqtk}).
\end{enumerate}
\begin{thm}[Convergence of DDP with cut selection for nonlinear programs.] \label{convddpth}
To solve problem \eqref{optconvexd}, consider the DDP algorithm 
presented in Section \ref{algorithmddp} combined with a cut selection strategy 
satisfying Assumption (H2).
Consider the sequences of vectors $x_t^k$ and functions $\mathcal{Q}_t^k$
generated by this algorithm.
Let Assumption (H1) hold. 
Let $(x_1^*,\ldots,x_T^*) \in \mathcal{X}_1\small{\times}\cdots \small{\times} \mathcal{X}_T$ be an accumulation point of the sequence
$(x_1^k, \ldots, x_T^k )_{k \geq 1}$ and let $K$ be an infinite subset of integers such that
for $t=1,\ldots,T$, $\lim_{k \to +\infty,\, k \in K} x_t^k = x_{t}^{*}$.
Then 
\begin{itemize}
\item[(i)] for all $k \geq 1$ we have $\mathcal{Q}_{T+1}( x_T^k)=\mathcal{Q}_{T+1}^k( x_T^k)$,
\begin{equation}\label{equalqT}
\mathcal{Q}_{T}( x_{T-1}^k)= \mathcal{Q}_{T}^k( x_{T-1}^k ) = {\underline{\mathcal{Q}}}_{T}^k( x_{T-1}^k), 
\end{equation}
and for $t=2,\ldots,T-1$,
$$
H(t): \;\displaystyle \lim_{k \to +\infty,\, k \in K} \mathcal{Q}_{t}^{k}(x_{t-1}^{k}) =\displaystyle \lim_{k \to +\infty,\, k \in K} {\underline{\mathcal{Q}}}_{t}^{k}(x_{t-1}^{k}) = \mathcal{Q}_{t}(x_{t-1}^{*}).
$$
\item[(ii)] Setting ${\underline{\mathcal{Q}}}_k = \sup_{n \leq k} {\underline{\mathcal{Q}}}_1^n( x_0 )$, we have
$
\displaystyle \lim_{k \to \infty} {\underline{\mathcal{Q}}}_k  = \mathcal{Q}_{1}(x_0)
$ 
and $(x_1^*,\ldots,x_T^*)$ is an optimal solution of \eqref{optconvexd}.
\end{itemize}
\end{thm}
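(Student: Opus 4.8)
The plan is to establish part (i) by backward induction on the stage index $t$, from $t=T$ down to $t=2$, and then to deduce part (ii) by treating the first stage and telescoping the dynamic programming recursion. For the base case, since $\mathcal{Q}_{T+1}\equiv\mathcal{Q}_{T+1}^k\equiv0$ the first identity of (i) is immediate, and because $\mathcal{Q}_{T+1}^{k-1}=\mathcal{Q}_{T+1}$ the problem \eqref{forward1} defining $\underline{\mathcal{Q}}_T^k$ coincides with the one in \eqref{dpconvexd} defining $\mathcal{Q}_T$, so $\underline{\mathcal{Q}}_T^k\equiv\mathcal{Q}_T$ and $\mathcal{C}_T^k(x_{T-1}^k)=\theta_T^k=\mathcal{Q}_T(x_{T-1}^k)$. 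Combining \eqref{ineq1} with the always valid bound $\mathcal{Q}_T^k\leq\mathcal{Q}_T$ forces $\mathcal{Q}_T^k(x_{T-1}^k)=\mathcal{Q}_T(x_{T-1}^k)$, which is \eqref{equalqT}, and continuity of $\mathcal{Q}_T$ on $\mathcal{X}_{T-1}$ (Lemma \ref{contqt}) then gives $\lim_{k\in K}\underline{\mathcal{Q}}_T^k(x_{T-1}^k)=\mathcal{Q}_T(x_{T-1}^*)$.

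For the induction step I would assume $\lim_{k\in K}\underline{\mathcal{Q}}_{t+1}^k(x_t^k)=\mathcal{Q}_{t+1}(x_t^*)$ and prove $H(t)$ in three moves. First, I would show $\lim_{k\in K}\mathcal{Q}_{t+1}^{k-1}(x_t^k)=\mathcal{Q}_{t+1}(x_t^*)$: the upper bound comes from $\mathcal{Q}_{t+1}^{k-1}\leq\mathcal{Q}_{t+1}$ and continuity of $\mathcal{Q}_{t+1}$; for the lower bound I would fix $j\in K$, use \eqref{ineq2} at the trial point $x_t^j$ together with \eqref{ineq1} to get $\mathcal{Q}_{t+1}^{k-1}(x_t^j)\geq\mathcal{Q}_{t+1}^{j}(x_t^j)\geq\mathcal{C}_{t+1}^j(x_t^j)=\theta_{t+1}^j$ whenever $k-1\geq j$, then invoke the uniform Lipschitz bound of Lemma \ref{contqtk} (constant $L_{t+1}$, valid for $k$ large) to obtain $\mathcal{Q}_{t+1}^{k-1}(x_t^k)\geq\theta_{t+1}^j-L_{t+1}\|x_t^k-x_t^j\|$, and finally let $k\to\infty$ and then $j\to\infty$ along $K$, using $\theta_{t+1}^j=\underline{\mathcal{Q}}_{t+1}^j(x_t^j)\to\mathcal{Q}_{t+1}(x_t^*)$ from the induction hypothesis. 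Second, writing $\underline{\mathcal{Q}}_t^k(x_{t-1}^k)=f_t(x_t^k,x_{t-1}^k)+\mathcal{Q}_{t+1}^{k-1}(x_t^k)$ from optimality of $x_t^k$ in \eqref{forward1}, I would use lower semicontinuity of $f_t$, the previous limit, and closedness of $\mathcal{X}_t$ together with lower semicontinuity of the $g_{t,i}$ and continuity of the affine constraint to see that $x_t^*$ is feasible for the stage-$t$ problem at $x_{t-1}^*$ and that $\liminf_{k\in K}\underline{\mathcal{Q}}_t^k(x_{t-1}^k)\geq f_t(x_t^*,x_{t-1}^*)+\mathcal{Q}_{t+1}(x_t^*)\geq\mathcal{Q}_t(x_{t-1}^*)$; combined with $\underline{\mathcal{Q}}_t^k(x_{t-1}^k)\leq\mathcal{Q}_t(x_{t-1}^k)\to\mathcal{Q}_t(x_{t-1}^*)$ this gives $\lim_{k\in K}\underline{\mathcal{Q}}_t^k(x_{t-1}^k)=\mathcal{Q}_t(x_{t-1}^*)=f_t(x_t^*,x_{t-1}^*)+\mathcal{Q}_{t+1}(x_t^*)$. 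Third, \eqref{ineq1} gives $\mathcal{Q}_t^k(x_{t-1}^k)\geq\mathcal{C}_t^k(x_{t-1}^k)=\underline{\mathcal{Q}}_t^k(x_{t-1}^k)$ while $\mathcal{Q}_t^k(x_{t-1}^k)\leq\mathcal{Q}_t(x_{t-1}^k)$, so the previous step yields $\lim_{k\in K}\mathcal{Q}_t^k(x_{t-1}^k)=\mathcal{Q}_t(x_{t-1}^*)$, which is $H(t)$ and closes the induction.

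For part (ii) I would run the same first two moves once more with $t=1$ (now using $H(2)$ and $x_0^k=x_0$), obtaining $\underline{\mathcal{Q}}_1^k(x_0)\to f_1(x_1^*,x_0)+\mathcal{Q}_2(x_1^*)=\mathcal{Q}_1(x_0)$ along $K$. Since $\underline{\mathcal{Q}}_k=\sup_{n\leq k}\underline{\mathcal{Q}}_1^n(x_0)$ is nondecreasing, bounded above by $\mathcal{Q}_1(x_0)$, and not smaller than $\underline{\mathcal{Q}}_1^k(x_0)$ for $k\in K$, it converges to $\mathcal{Q}_1(x_0)$. Chaining the identities $\mathcal{Q}_t(x_{t-1}^*)=f_t(x_t^*,x_{t-1}^*)+\mathcal{Q}_{t+1}(x_t^*)$ produced in the induction (for $t=1,\dots,T$, with $\mathcal{Q}_{T+1}\equiv0$) gives $\mathcal{Q}_1(x_0)=\sum_{t=1}^T f_t(x_t^*,x_{t-1}^*)$; since $(x_1^*,\dots,x_T^*)$ is feasible for \eqref{optconvexd} and, by the dynamic programming recursion \eqref{dpconvexd}, $\mathcal{Q}_1(x_0)$ is its optimal value, the trajectory is optimal.

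I expect the main obstacle to be the first move of the induction step. In plain DDP one has $\mathcal{Q}_{t+1}^{k-1}\leq\mathcal{Q}_{t+1}^{k}$ and can bound $\mathcal{Q}_{t+1}^{k-1}(x_t^k)$ directly by any earlier cut evaluated at $x_t^k$; with cut selection this monotonicity is lost, and Assumption (H2) only controls the heights of the approximations at the previously visited trial points. Hence the lower bound has to be derived at a trial point $x_t^j$ and then transported to the current point $x_t^k$ via the uniform Lipschitz estimate of Lemma \ref{contqtk} — precisely where compactness of the $\mathcal{X}_t$ and boundedness of the cut coefficients $\theta_t^k,\beta_t^k$ enter — and one must be careful that $L_{t+1}$ does not depend on $k$ and that the two passages to the limit $k\to\infty$ and $j\to\infty$ are carried out in the right order.
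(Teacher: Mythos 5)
Your proof is correct and follows essentially the same route as the paper's: the same backward induction with the same base case at $t=T$, and the same key step of transporting the (H2) monotonicity at earlier trial points to the current iterate $x_t^k$ via the uniform Lipschitz constant of Lemma \ref{contqtk}. The only differences are organizational — a $\liminf$/$\limsup$ double limit in place of the paper's explicit $\varepsilon_0$ bookkeeping, and the stagewise identities $\mathcal{Q}_t(x_{t-1}^*)=f_t(x_t^*,x_{t-1}^*)+\mathcal{Q}_{t+1}(x_t^*)$ derived inside the induction rather than collected at the end of part (ii).
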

\begin{proof} First note that the existence of an accumulation point comes from the fact
that the sequence $(x_1^k, x_2^k, \ldots, x_T^k)_{k \geq 1}$ is a sequence
of the compact set $\mathcal{X}_1\small{\times}\cdots \small{\times} \mathcal{X}_T$.
\par (i) Since $\mathcal{Q}_{T+1}=\mathcal{Q}_{T+1}^k =0$,
we have $\mathcal{Q}_{T+1}( x_T^k)=\mathcal{Q}_{T+1}^k( x_T^k)=0$
and by definition of $\mathcal{Q}_{T}, {\underline{\mathcal{Q}}}_{T}^k$, we also
have $\mathcal{Q}_{T} = {\underline{\mathcal{Q}}}_{T}^k$.
Next, recall that $\mathcal{Q}_{T}^k ( x_{T-1}^k  )  \leq \mathcal{Q}_{T}( x_{T-1}^k )$
and using Assumption (H2), we have
$$
\mathcal{Q}_{T}^k ( x_{T-1}^k  )  \geq \mathcal{C}_T^k (  x_{T-1}^k ) = \theta_T^k  = {\underline{\mathcal{Q}}}_T^k (  x_{T-1}^k )= \mathcal{Q}_{T}( x_{T-1}^k ).
$$
We have thus shown \eqref{equalqT}.

We show $H(t), t=2,\ldots,T-1$, by backward induction on $t$. Due to \eqref{equalqT} and the continuity of $\mathcal{Q}_T$, we have that $H(T)$ hold.
Now assume that $H(t+1)$ holds for some $t \in \{2,\ldots,T-1\}$. We want to show that $H(t)$ holds.
We have for $k \in K$ that
\begin{equation} \label{eqconv1}
\begin{array}{lll}
\mathcal{Q}_{t}(x_{t-1}^k) \geq  \mathcal{Q}_{t}^k(x_{t-1}^k) & \geq & \theta_t^k \mbox{ using }\mbox{(H2)},\\
& \geq &  f_t(x_t^ k, x_{t-1}^ k) + \mathcal{Q}_{t+1}^{k-1}(x_t^k) \mbox{ by definition of }\theta_t^k,\\
& \geq &  F_t(x_t^ k, x_{t-1}^ k) - \mathcal{Q}_{t+1}(x_t^k)  + \mathcal{Q}_{t+1}^{k-1}(x_t^k)\mbox{ by definition of }F_t,\\
& \geq & \mathcal{Q}_t(x_{t-1}^ k) - \mathcal{Q}_{t+1}(x_t^k)  + \mathcal{Q}_{t+1}^{k-1}(x_t^k),
\end{array}
\end{equation}
where the last inequality comes from the fact that $x_t^k$ is feasible for the problem
defining $\mathcal{Q}_t(x_{t-1}^k)$.

Let $K=\{y_k \;|\; k \in \mathbb{N}\}$.
Denoting $\ell=\lim_{k \to +\infty} \mathcal{Q}_{t+1}^{y_k}(x_t^{y_k})$,
we now show that $$\ell=\lim_{k \to +\infty} \mathcal{Q}_{t+1}^{y_k - 1}(x_t^{y_k}).$$
Fix $\varepsilon_0>0$.
From Lemma \ref{contqtk}, functions $\mathcal{Q}_{t+1}^k, k \geq T$, are
Lipschitz continuous on $\mathcal{X}_t$.
Let $L$ be a Lipschitz constant for these functions, independent on $k$ (see \cite{guigues2014cvsddp}
for an expression of $L$, independent on $k$, expressed in terms of the problem data). 
Since $\lim_{k \to +\infty} \;x_{t}^{y_k}=x_{t}^{*}$, 
there exists $k_0 \in \mathbb{N}$ with $k_0\geq T$ such that for $k \geq k_0$, we have
\begin{equation} \label{conxtmL}
\|x_{t}^{y_k} - x_{t}^{*}  \| \leq \frac{\varepsilon_0}{4L} \mbox{ and }
\ell - \frac{\varepsilon_0}{2} \leq \mathcal{Q}_{t+1}^{y_k}(x_t^{y_k}) \leq \ell + \frac{\varepsilon_0}{2}.
\end{equation}
Take now $k \geq k_0+1$. Since $y_k \geq y_{k_0} +1$, using Assumption (H2) we have
\begin{equation}\label{lem1eq}
\mathcal{Q}_{t+1}^{y_k -1}(x_t^{y_{k_0}}) \geq  \mathcal{Q}_{t+1}^{y_{k_0}}(x_t^{y_{k_0}}).
\end{equation}
Using Assumption (H2) again, we obtain
\begin{equation}\label{lem1eq2}
\mathcal{Q}_{t+1}^{y_k}(x_t^{y_k})  \geq \mathcal{Q}_{t+1}^{y_k - 1}(x_t^{y_k}).
\end{equation}
Recalling that $k \geq k_0+1$ and that $y_k - 1 \geq y_{k_0} \geq k_0 \geq T$, observe that 
\begin{equation}\label{lem1eq3}
| \mathcal{Q}_{t+1}^{y_k -1}(x_t^{y_k}) - \mathcal{Q}_{t+1}^{y_k -1}(x_t^{y_{k_0}}) | \leq L \|x_t^{y_{k}}-x_t^{y_{k_0}}\| \stackrel{\eqref{conxtmL}}{\leq} \frac{\varepsilon_0}{2}.
\end{equation}
It follows that 
\begin{equation}\label{firstdiff}
 \begin{array}{lll}
\max\Big(\mathcal{Q}_{t+1}^{y_k -1}(x_t^{y_{k_0}}),\mathcal{Q}_{t+1}^{y_k -1}(x_t^{y_k})\Big) &\stackrel{\eqref{lem1eq3}}{\leq}&
\min\Big(\mathcal{Q}_{t+1}^{y_k -1}(x_t^{y_{k_0}}),\mathcal{Q}_{t+1}^{y_k -1}(x_t^{y_k})\Big)  + \frac{\varepsilon_0}{2} \\
&\stackrel{\eqref{lem1eq2}}{\leq}& \mathcal{Q}_{t+1}^{y_k}(x_t^{y_k})  + \frac{\varepsilon_0}{2} \\
&\stackrel{\eqref{conxtmL}}{\leq}&\ell + \varepsilon_0.
\end{array}
\end{equation}
We also have 
\begin{equation} \label{secondiff}
\begin{array}{lcl}
\max\Big(\mathcal{Q}_{t+1}^{y_k -1}(x_t^{y_{k_0}}),\mathcal{Q}_{t+1}^{y_k -1}(x_t^{y_k})\Big) 
&\geq&  \min\Big(\mathcal{Q}_{t+1}^{y_k -1}(x_t^{y_{k_0}}),\mathcal{Q}_{t+1}^{y_k -1}(x_t^{y_k})\Big) \\
&\stackrel{\eqref{lem1eq3}}{\geq}&
\max\Big(\mathcal{Q}_{t+1}^{y_k -1}(x_t^{y_{k_0}}),\mathcal{Q}_{t+1}^{y_k -1}(x_t^{y_k})\Big)  - \frac{\varepsilon_0}{2} \\
&\stackrel{\eqref{lem1eq}}{\geq}& \mathcal{Q}_{t+1}^{y_{k_0}}(x_t^{y_{k_0}})  -\frac{\varepsilon_0}{2} \\
&\stackrel{\eqref{conxtmL}}{\geq} &\ell - \varepsilon_{0}.
\end{array}
\end{equation}
We have thus shown that both $\mathcal{Q}_{t+1}^{y_k -1}(x_t^{y_{k_0}})$ and $\mathcal{Q}_{t+1}^{y_k -1}(x_t^{y_k})$ belong to the interval 
$[\ell-\varepsilon_{0}, \ell+\varepsilon_{0}]$.
Summarizing, we have shown that for every $\varepsilon_{0}>0$, there exists $k_0$ such that
for all $k \geq k_0+1$ we have
$$
\ell - \varepsilon_0 \leq \mathcal{Q}_{t+1}^{y_k -1}(x_t^{y_k})  \leq \ell + \varepsilon_{0},
$$
which shows that $\ell=\lim_{k \to +\infty} \mathcal{Q}_{t+1}^{y_k - 1}(x_t^{y_k})$.
It follows that 
\begin{equation} \label{eqconv3}
\lim_{k \to +\infty,\, k \in K} \mathcal{Q}_{t+1}^{k-1}(x_t^k)-\mathcal{Q}_{t+1}(x_t^k)=\lim_{k \to +\infty,\, k \in K} \mathcal{Q}_{t+1}^{k}(x_t^k)-\mathcal{Q}_{t+1}(x_t^k).
\end{equation}
Due to $H(t+1)$ and the continuity of 
$\mathcal{Q}_{t+1}$ on $\mathcal{X}_t$ (Lemma \ref{contqt}), the right hand side of \eqref{eqconv3}
is null. It follows that 
\begin{equation}\label{eqconv2}
\lim_{k \to +\infty,\, k \in K} -\mathcal{Q}_{t+1}(x_t^k)  + \mathcal{Q}_{t+1}^{k-1}(x_t^k) =0.
\end{equation}
Plugging \eqref{eqconv2} into \eqref{eqconv1}, recalling that $\theta_t^k= {\underline{\mathcal{Q}}}_{t}^{k}(x_{t-1}^{k})$,
using the fact that $\lim_{k \to +\infty,\, k \in K} x_{t-1}^k = x_{t-1}^{*}$
and the continuity of  $\mathcal{Q}_t$ (Lemma \ref{contqt}), we obtain $H(t)$.
\par (ii) By definition of ${\underline {Q}}_1^{k}$, we have
$$
{\underline{Q}}_1^{k} (x_{0})=F_1(x_1^{k}, x_0)-\mathcal{Q}_2(x_1^{k})+\mathcal{Q}_2^{k-1} (x_1^k)
$$
which implies 
\begin{equation}\label{finaldiii}
0 \leq - {\underline{Q}}_1^{k}(x_{0}) +\mathcal{Q}_1(x_{0})\leq \mathcal{Q}_2(x_1^k)-\mathcal{Q}_2^{k-1}(x_1^k).
\end{equation}
From (i), $H(2)$ holds, which, combined with the continuity
of $\mathcal{Q}_2$ (Lemma \ref{contqt}), gives 
$$\lim_{k \to +\infty,\,k\in K}\; \mathcal{Q}_{2}(x_1^k)-\mathcal{Q}_{2}^{k}(x_1^k)=0.
$$
Following the proof of (i), we show that this implies 
\begin{equation} \label{consdet}
\lim_{k \to +\infty,\,k\in K}\; \mathcal{Q}_{2}(x_1^k)-\mathcal{Q}_{2}^{k-1}(x_1^k)=0.
\end{equation}
Combining this relation with \eqref{finaldiii}, we obtain 
\begin{equation} \label{convvalopt}
\lim_{k \to +\infty,\,k\in K}\;{\underline{Q}}_1^{k} (x_{0}) =\mathcal{Q}_1(x_0).
\end{equation}

It follows that for every $\varepsilon_{0}>0$, there exists $k_0 \in K$ such that for $k \in K$
with $k \geq k_0$, we have $0 \leq \mathcal{Q}_1( x_0 ) - {\underline{\mathcal{Q}}}_1^k ( x_0 ) \leq \varepsilon_0$.
We then have for every $k \in \mathbb{N}$ with
$k \geq k_0$ that
$$
0 \leq \mathcal{Q}_1( x_0 ) - {\underline{\mathcal{Q}}}_k  \leq \mathcal{Q}_1( x_0 ) - {\underline{\mathcal{Q}}}_1^{k_0}( x_0 ) \leq \varepsilon_{0},
$$
which shows that $\lim_{k \to +\infty} {\underline{\mathcal{Q}}}_k = \mathcal{Q}_1( x_0 )$.

Take now $t \in \{1,\ldots,T\}$. By definition of $x_t^k$, we have
\begin{equation}\label{convfirststaged}
\begin{array}{lll}
 f_t(x_t^k, x_{t-1}^k) + \mathcal{Q}_{t+1}^{k-1}(x_t^k)&=& {\underline{Q}}_t^{k}(x_{t-1}^k).
\end{array} 
\end{equation}
Setting $x_0^* = x_0$, recall that we have shown that $\lim_{k \in K, k \to +\infty } {\underline{Q}}_t^{k}(x_{t-1}^k) = \mathcal{Q}_t (  x_{t-1}^* )$
(for $t=1$, this is \eqref{convvalopt}; for $t \in \{2, \ldots, T-1\}$ this is $H(t)$; and for 
$t=T$ this is obtained taking the limit in \eqref{equalqT} when $k \to +\infty$ with $k \in K$).

Taking the limit in \eqref{convfirststaged} when $k \to +\infty$ with $k \in K$, 
using \eqref{eqconv2}, the continuity of $\mathcal{Q}_{t+1}$ (Lemma \ref{contqt}) and
the lower semi-continuity of $f_t$, we obtain
\begin{equation}\label{x*opt}
F_t(x_{t}^*, x_{t-1}^* ) = f_t(x_{t}^* , x_{t-1}^* ) + \mathcal{Q}_{t+1}(x_t^* ) \leq 
\lim_{k \in K, k \to +\infty }  {\underline{Q}}_t^{k}(x_{t-1}^k)  =\mathcal{Q}_t ( x_{t-1}^* ).
\end{equation}
Passing to the limit in the relations $A_t x_t^k + B_t x_{t-1}^k = b_t$, we obtain
$A_t x_t^* + B_t x_{t-1}^* = b_t$. Since $g_t$ is lower semicontinuous, its level sets
are closed, which implies that $g_t( x_t^* , x_{t-1}^* ) \leq 0$. Recalling that
$x_t \in \mathcal{X}_t$, we have that $x_t^*$ is feasible for the problem
defining $\mathcal{Q}_t(  x_{t-1}^* )$.
Combining this observation with \eqref{x*opt}, we have shown that
$x_t^*$ is an optimal solution for the problem defining $\mathcal{Q}_t(  x_{t-1}^* )$, i.e., 
problem \eqref{dpconvexd} written for $x_{t-1}= x_{t-1}^*$. This shows that
$(x_1^*, \ldots, x_T^*)$ is an optimal solution to \eqref{optconvexd}. \hfill
\end{proof}

We now discuss the case when for some time steps the objective and/or the constraints are linear.
If for a given time step $t$, $\mathcal{X}_t$ is a polytope, and we 
do not have the nonlinear constraints $g_t(x_t, x_{t-1}) \leq 0$ (i.e., the constraints are linear), then
the conclusions
of Lemmas \ref{contqt}  and \ref{contqtk} hold and thus 
Theorem \ref{convddpth} holds too under weaker assumptions.
More precisely, for such time steps, we assume (H1)-(a), (H1)-(b), and instead of (H1)-(d), the weaker assumption:\\
\par (H1)-(d') there exists $\varepsilon>0$ such that 
$\mathcal{X}_{t}^{\varepsilon} {\small{\times}}  \mathcal{X}_{t-1} \subset \mbox{dom}(f_t)$
and for every $x_{t-1} \in \mathcal{X}_{t-1}$, there exists  
$x_t \in \mathcal{X}_t$ such that $\displaystyle A_{t} x_{t} + B_{t} x_{t-1} = b_t$.\\

\par If, additionally,  for a given time step $t$ the objective $f_t$ is linear, i.e., if both the objective and
the constraints are linear, then (H1)-(b) and the first part of 
(H1)-(d'), namely the existence of $\varepsilon>0$ such that 
$\mathcal{X}_{t}^{\varepsilon} {\small{\times}}  \mathcal{X}_{t-1} \subset \mbox{dom}(f_t)$, are automatically
satisfied. It follows that Theorem \ref{convddpth} still holds assuming (H1) for the time steps where
the constraints are nonlinear and (H1)-(a), (H1)-(b), (H1)-(d') for the time steps where the constraints
are linear, i.e., the time steps where $\mathcal{X}_t$ is a polytope and the
constraints $g_t(x_t, x_{t-1}) \leq 0$ are absent. 

\par However, in the special case of linear programs, we prove in Theorem \ref{convddpthlp} below that DDP with cut selection converges in a finite number of iterations

\section{Convergence analysis for linear problems} \label{convanalysislp}

The DDP algorithm can be applied to the following linear programs of form \eqref{optconvexd} without the nonlinear constraints $g_t(x_t, x_{t-1}) \leq 0$:
\begin{equation}\label{optlp}
\left\{
\begin{array}{l}
\displaystyle \inf_{x_1,\ldots,x_T}\;\sum_{t=1}^{T} f_t(x_{t}, x_{t-1}):=c_t^T x_t + d_t^T x_{t-1} \\
x_t \in \mathcal{X}_t,\; \displaystyle A_{t} x_{t} + B_{t} x_{t-1} = b_t, t=1,\ldots,T,
\end{array}
\right.
\end{equation}
where $\mathcal{X}_1,\ldots,\mathcal{X}_T$, are polytopes.
Making Assumptions (H1)-(a) and (H1)-(d')
for every time $t=1,\ldots,T$, Theorem \ref{convddpth} applies.
However, in this special case, we prove in Theorem \ref{convddpthlp} that DDP converges in a finite number of iterations
to an optimal solution. The proof of this theorem is similar to the proof of Lemmas 1 and 4 in \cite{philpot}.
Observe also that for problems of form \eqref{optlp}, Assumptions (H1)-(a) and (H1)-(d') can be stated as follows:
$\mathcal{X}_t$ is a nonempty, convex, compact polytope and for every $x_{t-1} \in \mathcal{X}_{t-1}$, there exists  
$x_t \in \mathcal{X}_t$ such that $\displaystyle A_{t} x_{t} + B_{t} x_{t-1} = b_t$.
\begin{thm}[Convergence of DDP with cut selection for linear programs.] \label{convddpthlp}
To solve problem \eqref{optlp}, consider the DDP algorithm combined with 
the Territory algorithm, Level 1 cut selection, or limited memory  Level 1 cut selection.  
Consider the sequences of vectors $x_t^k$ and functions $\mathcal{Q}_t^k$
generated by this algorithm.
Setting $\mathcal{X}_0 = \{ x_0 \}$, assume that for $t=1,\ldots,T$,
$\mathcal{X}_t$ is a nonempty, convex, compact, polytope and that for every $x_{t-1} \in \mathcal{X}_{t-1}$, there exists  
$x_t \in \mathcal{X}_t$ such that $\displaystyle A_{t} x_{t} + B_{t} x_{t-1} = b_t$.
Assume also that 
\begin{itemize}
\item[(H3)] the linear subproblems solved along the iterations of DDP are solved using an
algorithm that necessarily outputs a vertex as an optimal solution.\footnote{For instance a simplex algorithm.} 
\end{itemize}
Then there exists $k_0 \in \mathbb{N}^*$ such that
\begin{itemize}
\item[(i)] for every $k \geq k_0-1$, for every $t=2,\ldots,T$, we have $\mathcal{Q}_t^k = \mathcal{Q}_t^{k_0 -1}$.
\item[(ii)] $(x_1^{k_0},\ldots,x_T^{k_0})$ is an optimal solution to \eqref{optlp}.
\end{itemize}
\end{thm}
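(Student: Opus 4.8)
The plan is to exploit that, for linear programs, only finitely many cuts can ever be generated and only finitely many forward iterates can ever occur; combined with property (H2) this forces the approximations $\mathcal{Q}_t^k$ to become stationary after finitely many iterations, and a backward induction then certifies the corresponding forward trajectory as an optimal solution. The first ingredient I would establish is that, for each $t$, there is a finite set $\mathcal{P}_t$ of affine functions containing every cut $\mathcal{C}_t^\ell$, $\ell\ge 1$; this is proved by backward induction on $t$, the case $t=T+1$ being trivial since $\mathcal{Q}_{T+1}\equiv 0$. Writing $\mathcal{X}_t=\{x:D_tx\le e_t\}$ and $f_t(x_t,x_{t-1})=c_t^{\transp}x_t+d_t^{\transp}x_{t-1}$, subproblem \eqref{forward1} at stage $t$ is a linear program whose only non-fixed data are a right-hand side depending on the trial point $x_{t-1}^k$ and the cut constraints built from $\mathcal{Q}_{t+1}^{k-1}=\max_{j\in\mathcal{S}_{t+1}^{k-1}}\mathcal{C}_{t+1}^j$, whose cuts all lie in the finite set $\mathcal{P}_{t+1}$. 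The key observation is that the dual feasible set of this linear program does not depend on its right-hand side, hence not on $x_{t-1}^k$; since $\mathcal{P}_{t+1}$ has finitely many subsets, only finitely many dual feasible polyhedra, hence finitely many dual vertices, can occur. By Lemma \ref{dervaluefunction} (in its differentiable form, $f_t$ being linear) the slope $\beta_t^k$ is a fixed affine image of the dual vertex used, and the resulting cut $\mathcal{C}_t^k$ coincides with the affine piece of ${\underline{\mathcal{Q}}}_t^k(\cdot)$ associated to that vertex, whose coefficients are determined by the vertex together with the (finitely many) intercepts of the selected cuts of $\mathcal{Q}_{t+1}$. Hence $\mathcal{C}_t^k$ ranges over a finite set $\mathcal{P}_t$.

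Next I would prove, by forward induction on $t$ starting from $\mathcal{X}_0=\{x_0\}$, that $\{x_t^k:k\ge 1\}$ is contained in a finite set $V_t$: under Assumption (H3), $x_t^k$ is a vertex of the feasible set of \eqref{forward1}, whose constraints involve only fixed data, a right-hand side driven by $x_{t-1}^k\in V_{t-1}$ (finite by induction) and cut constraints drawn from the finite pool $\mathcal{P}_{t+1}$; there are finitely many such polyhedra, hence finitely many vertices. With this in hand I would prove (i). Each $\mathcal{Q}_t^k$ is the maximum of a subset of the finite pool $\mathcal{P}_t$, so for every $v\in V_{t-1}$ the value $\mathcal{Q}_t^k(v)$ lies in a finite set; since $\mathcal{Q}_t^k$ agrees with $\max_{\ell\le k}\mathcal{C}_t^\ell$ at the trial points, Assumption (H2) (see \eqref{ineq2}) makes $k\mapsto\mathcal{Q}_t^k(v)$ nondecreasing once $v$ has been visited, hence eventually constant; over the finitely many pairs $(t,v)$ there is $k_0\in\mathbb{N}^*$ such that the heights at all trial points are frozen from iteration $k_0-1$ on. Once this holds, at every previously visited trial point $v$ each newly computed cut satisfies $\mathcal{C}_t^k(v)\le\mathcal{Q}_t^{k-1}(v)$ (otherwise $\mathcal{Q}_t^k(v)=\max_{\ell\le k}\mathcal{C}_t^\ell(v)$ would strictly increase), and a short argument carried out separately for the Territory algorithm, Level~1 and its limited memory variant shows that the set of selected cuts, a subset of the finite pool $\mathcal{P}_t$, then stops changing; therefore $\mathcal{Q}_t^k=\mathcal{Q}_t^{k_0-1}$ as functions for all $k\ge k_0-1$ and all $t=2,\ldots,T$, which is (i). I expect this stabilization step to be the main obstacle: (H2) controls the approximations only at the trial points, so turning ``heights frozen at every trial point'' into ``$\mathcal{Q}_t^k$ frozen as a function'' requires both the finiteness of $\mathcal{P}_t$ and a rule-by-rule verification that no selected cut is dropped or replaced after the heights freeze; a secondary subtlety is the observation (above) that the dual feasible set is independent of the trial points, which is what lets the two finiteness inductions run in opposite directions.

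Finally I would prove (ii). Set $y_t:=x_t^{k_0}$ and $y_0:=x_0$, and argue by backward induction on $t\in\{T,\ldots,1\}$ that $\mathcal{Q}_{t+1}^{k_0-1}(y_t)=\mathcal{Q}_{t+1}(y_t)$ and that $y_t$ is an optimal solution of the exact stage-$t$ problem \eqref{dpconvexd} written at $x_{t-1}=y_{t-1}$, with value $\mathcal{Q}_t(y_{t-1})$; the base case $t=T$ is immediate because $\mathcal{Q}_{T+1}^{k_0-1}=\mathcal{Q}_{T+1}\equiv 0$. For the inductive step, $y_{t+1}$ solves the stage-$(t+1)$ forward subproblem at $y_t$, so $\theta_{t+1}^{k_0}={\underline{\mathcal{Q}}}_{t+1}^{k_0}(y_t)=f_{t+1}(y_{t+1},y_t)+\mathcal{Q}_{t+2}^{k_0-1}(y_{t+1})$, which by the induction hypothesis equals $f_{t+1}(y_{t+1},y_t)+\mathcal{Q}_{t+2}(y_{t+1})=\mathcal{Q}_{t+1}(y_t)$; hence $\mathcal{C}_{t+1}^{k_0}(y_t)=\theta_{t+1}^{k_0}=\mathcal{Q}_{t+1}(y_t)$, and combining $\mathcal{Q}_{t+1}(y_t)\ge\mathcal{Q}_{t+1}^{k_0}(y_t)\ge\mathcal{C}_{t+1}^{k_0}(y_t)=\mathcal{Q}_{t+1}(y_t)$ (the right inequality being \eqref{ineq1}) with part (i) yields $\mathcal{Q}_{t+1}^{k_0-1}(y_t)=\mathcal{Q}_{t+1}(y_t)$. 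Therefore ${\underline{\mathcal{Q}}}_t^{k_0}(y_{t-1})=f_t(y_t,y_{t-1})+\mathcal{Q}_{t+1}(y_t)\ge\mathcal{Q}_t(y_{t-1})$ because $y_t$ is feasible for the exact stage-$t$ problem, while ${\underline{\mathcal{Q}}}_t^{k_0}\le\mathcal{Q}_t$ because $\mathcal{Q}_{t+1}^{k_0-1}\le\mathcal{Q}_{t+1}$, so equality holds throughout and $y_t$ is an exact stage-$t$ minimizer. Reading this off at $t=1$ (and chaining the dynamic programming equations) shows $(x_1^{k_0},\ldots,x_T^{k_0})$ is optimal for \eqref{optlp}.
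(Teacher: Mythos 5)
Your proof is correct and follows essentially the same route as the paper: finiteness of the set of possible cuts and of the trial points (which the paper delegates to Lemma 1 of \cite{philpot} and which you reprove via the observation that the dual feasible set is independent of the right-hand side), stabilization of the $\mathcal{Q}_t^k$ via Assumption (H2) and finiteness, and then a backward induction establishing ${\underline{\mathcal{Q}}}_t^{k_0}(x_{t-1}^{k_0})=\mathcal{Q}_t(x_{t-1}^{k_0})$ followed by chaining the dynamic programming equations, exactly as in the paper's $\mathcal{H}_2(t)$ and $\mathcal{H}_3(t)$. The only cosmetic difference is that you derive $\mathcal{Q}_{t+1}^{k_0-1}(x_t^{k_0})=\mathcal{Q}_{t+1}(x_t^{k_0})$ directly from \eqref{ineq1} together with part (i), whereas the paper phrases the same step as a proof by contradiction.
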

\begin{proof}
\par (i) Using Assumption (H3), the algorithm can only generate a finite number of different trial points $x_t^k$.\footnote{Without this assumption,
if a face of a polyhedron is solution, the algorithm could potentially return an infinite number of trial points and cuts.}
Indeed, this is true
for $t=1$ (under Assumption (H3), $x_1^k$ is an extreme point of a polyhedron) and assuming that the result is true for $t<T$ then there is a finite number of polyhedrons, parametrized by $x_t^k$,
to which $x_{t+1}^k$ can belong and $x_{t+1}^k$ is an extreme point of one of these polyhedrons.
Reasoning as in the proof of Lemma 1 in \cite{philpot}, we also check
that the algorithm can only compute a finite number of different cuts. Once no new trial point is computed, a cut that is suppressed will never be selected
again (since it is already dominated at all the trial points generated by the algorithm).
It follows that after a given iteration $k_{0}-1$, no new
trial point and no new cut is computed. Since the cut selection strategy is uniquely defined by the history of cuts,
the cost-to-go functions stabilize (note that the final selected cuts can be different for Level 1 and its limited memory variant but for a fixed cut selection strategy, they are
uniquely defined by the set of trial points and cuts computed until iteration $k_0-1$).
\par (ii) We first show by backward induction that for $t=1,\ldots,T$, we have
$$
\mathcal{H}_2(t): \;\;{\underline{\mathcal{Q}}}_t^{k_0} ( x_{t-1}^{k_0} )= \mathcal{Q}_t ( x_{t-1}^{k_0}).
$$
Since ${\underline{\mathcal{Q}}}_T^{k_0} = \mathcal{Q}_T$, the relation holds for $t=T$. Assume now that the relation holds
for $t+1$ with $t \in \{1,\ldots,T-1\}$. By definition of $x_t^{k_0}$, we have
$$
{\underline{\mathcal{Q}}}_t^{k_0} ( x_{t-1}^{k_0} ) = f_t (x_t^{k_0}, x_{t-1}^{k_0} ) + \mathcal{Q}_{t+1}^{k_0 - 1} ( x_{t}^{k_0} ).
$$
Recall that $\mathcal{Q}_{t+1}^{k_0 - 1} ( x_{t}^{k_0} ) \leq \mathcal{Q}_{t+1} ( x_{t}^{k_0} )$. If this inequality was
strict, i.e., if we had
\begin{equation}
\mathcal{Q}_{t+1}^{k_0 - 1} ( x_{t}^{k_0} ) < \mathcal{Q}_{t+1} ( x_{t}^{k_0} ) 
\end{equation}
then using $\mathcal{H}_2(t+1)$ we would also have $\mathcal{Q}_{t+1}^{k_0 - 1} ( x_{t}^{k_0} ) < {\underline{\mathcal{Q}}}_{t+1}^{k_0} ( x_{t}^{k_0} )$
and at iteration $k_0$ we could add a new cut for $\mathcal{Q}_{t+1}$ at $x_{t}^{k_0}$ with height ${\underline{\mathcal{Q}}}_{t+1}^{k_0} ( x_{t}^{k_0} )$ at this point, which is in contradiction with (i).
Therefore we must have $\mathcal{Q}_{t+1}^{k_0 - 1} ( x_{t}^{k_0} ) = \mathcal{Q}_{t+1} ( x_{t}^{k_0} )$ which implies
${\underline{\mathcal{Q}}}_t^{k_0} ( x_{t-1}^{k_0}   ) = F_t( x_{t}^{k_0}, x_{t-1}^{k_0} ) \geq \mathcal{Q}_t (  x_{t-1}^{k_0}  )$.
Since we also have ${\underline{\mathcal{Q}}}_t^{k_0} \leq  \mathcal{Q}_t$, we have shown $\mathcal{H}_2(t)$. We now check that
for $t=1,\ldots,T$, we have
$$
\mathcal{H}_3(t):\;\mathcal{Q}_1( x_0 ) = \sum_{j=1}^t f_j( x_j^{k_0}, x_{j-1}^{k_0} ) + \mathcal{Q}_{t+1}( x_t^{k_0}), 
$$
recalling that $x_0^{k_0}=x_0$.
Indeed, we have 
$$
\mathcal{Q}_1( x_0 ) \stackrel{\mathcal{H}_2(1)}{=} {\underline{\mathcal{Q}}}_1^{k_0} ( x_0 ) = f_1 ( x_1^{k_0}, x_0 ) + \mathcal{Q}_2^{k_0 -1}( x_1^{k_0} ) =f_1 ( x_1^{k_0}, x_0 ) + \mathcal{Q}_2 ( x_1^{k_0} )
$$
which is $\mathcal{H}_3(1)$. Assuming that $\mathcal{H}_3(t)$ holds for $t<T$, we have
$$
\begin{array}{lll}
\mathcal{Q}_1( x_0 )& =& \sum_{j=1}^t f_j( x_j^{k_0}, x_{j-1}^{k_0} ) + \mathcal{Q}_{t+1}( x_t^{k_0}) \stackrel{\mathcal{H}_2(t+1)}{=} \sum_{j=1}^t f_j( x_j^{k_0}, x_{j-1}^{k_0} ) + {\underline{\mathcal{Q}}}_{t+1}^{k_0}( x_t^{k_0})\\
& =& \sum_{j=1}^t f_j( x_j^{k_0}, x_{j-1}^{k_0} ) + f_{t+1}( x_{t+1}^{k_0}, x_{t}^{k_0}  )  + \mathcal{Q}_{t+2}^{k_0 -1}( x_{t+1}^{k_0}),\;\ \mbox{by definition of }x_{t+1}^{k_0},\\
& =& \sum_{j=1}^{t+1} f_j( x_j^{k_0}, x_{j-1}^{k_0} )  + \mathcal{Q}_{t+2}( x_{t+1}^{k_0}),
\end{array}
$$
which shows $\mathcal{H}_3(t+1)$.
$\mathcal{H}_3(T)$ means than $(x_1^{k_0},\ldots,x_T^{k_0})$ is an optimal solution to \eqref{optlp}.
\end{proof}

\section{Numerical experiments}\label{numsim}

We consider a portfolio selection and an inventory problem and compare the computational time required to solve
these problems using a simplex algorithm and DDP both with and without cut selection. In practice, the parameters of these problems (respectively the returns and the demands)
are not known in advance and stochastic optimization models are used for these applications. In our experiments where we intend to compare various solution methods for deterministic problems
of form \eqref{optconvexd}, we see these classes of problems as test problems for DDP with and without cut selection. Feasible instances of these problems can be
easily generated choosing randomly the problem parameters (initial wealth and the returns for the first; initial stock and demands for the second)
which are assumed to be known for our experiments.

\subsection{An inventory problem}

We consider the deterministic counterpart of the inventory problem described in Section 1.2.3 of \cite{shadenrbook}.
For each time step $t=1,\ldots, T$, of a given horizon of $T$ steps, on the basis of the inventory
level $y_t$ at the beginning of period $t$, we have to decide the quantity $x_t - y_t$ of a product
to buy so that the inventory level becomes $x_t$. Knowing the demand $\mathcal{D}_t$ for that product
for time $t$, the  inventory level is $y_{t+1}=x_t-\mathcal{D}_t$ at the beginning of period $t+1$.
The inventory level can become negative, in which case a backorder cost is paid.
We obtain the following optimization problem, of form \eqref{optconvexd}:
\begin{equation}\label{p2}
 \begin{array}{l}
  \min \;  \sum_{t=1}^{T} c_t (x_t - y_t ) + b_t (\mathcal{D}_t - x_t)_{+}  +  h_t (x_t - \mathcal{D}_t)_{+} \\
  x_t \geq y_t,\;t=1,\ldots,T,\\
  y_{t+1} = x_t -\mathcal{D}_t,\;t=1,\ldots,T-1,
\end{array}
\end{equation}
where $c_t$, $b_t$, $h_t$ are respectively  ordering, backorder penalty, and holding costs per unit,
at time $t$. We take $c_t =1.5+ \cos(\frac{\pi t}{6}), b_t=2.8>c_t$, $h_t=0.2$, $y_1=10$, and
$\mathcal{D}_t=5+\frac{1}{2}t$. For this problem, we can write the following DP equations
$$
\mathcal{Q}_t ( y_t ) =
\left\{
\begin{array}{l}
\min \; c_t (x_t - y_t ) +  b_t (\mathcal{D}_t - x_t)_{+}  +  h_t (x_t - \mathcal{D}_t)_{+} + \mathcal{Q}_{t+1} ( y_{t+1} )\\
y_{t+1} = x_t - D_t, x_t \geq y_t,
\end{array}
\right.
$$
for $t=1,\ldots,T$, with state vector $y_t$ of size one and $\mathcal{Q}_{T+1} \equiv 0$. 

We consider five algorithms to solve \eqref{p2}:
\begin{itemize}
\item  {\tt{Simplex}} as implemented by Mosek Optimization Toolbox \cite{mosek};
\item  {\tt{DDP}} as described in Section \ref{algorithmddp};
\item  {\tt{DDP}} with Level 1 cut selection ({\tt{DDP-CS-1}} for short);
\item  {\tt{DDP}} with the limited memory variant of Level 1 cut selection described in Section \ref{csmethods} ({\tt{DDP-CS-2}} for short).
\item  {\tt{Dynamic Programming}} ({\tt{DP}} for short) which computes in a backward pass for each $t=2,\ldots,T$, approximate values for $\mathcal{Q}_t$
at a discrete set of $N$ points equally spaced over the interval $[-100, 2000]$. These approximate values allow us to obtain accurate
representations of the recourse functions using affine interpolations between these points.
\end{itemize}
We stop algorithms {\tt{DDP}}, {\tt{DDP-CS-1}}, and {\tt{DDP-CS-2}} when the gap between the upper bound (given by the value of the objective at the feasible decisions) and the lower bound
(given by the optimal value of the approximate first stage problem , i.e., problem \eqref{forward1} written for $t=1$)
computed along the iterations is below a fixed tolerance $\varepsilon>0$. All subproblems to be solved along the iterations 
of {\tt{DP}}, {\tt{DDP}}, {\tt{DDP-CS-1}}, and {\tt{DDP-CS-2}} were solved using Mosek Optimization Toolbox \cite{mosek}.

We first take $T=600, N=2001$, and $\varepsilon=0.1$.
For this instance, the evolution of the upper and lower bounds computed along the iterations
of {\tt{DDP}} and {\tt{DDP-CS-2}} are represented in Figure \ref{figure0}.\footnote{This graph shows the typical evolution
of the upper (which tends to decrease) and lower (which is nondecreasing) bounds for DDP.}

\begin{figure}
\includegraphics[scale=0.8]{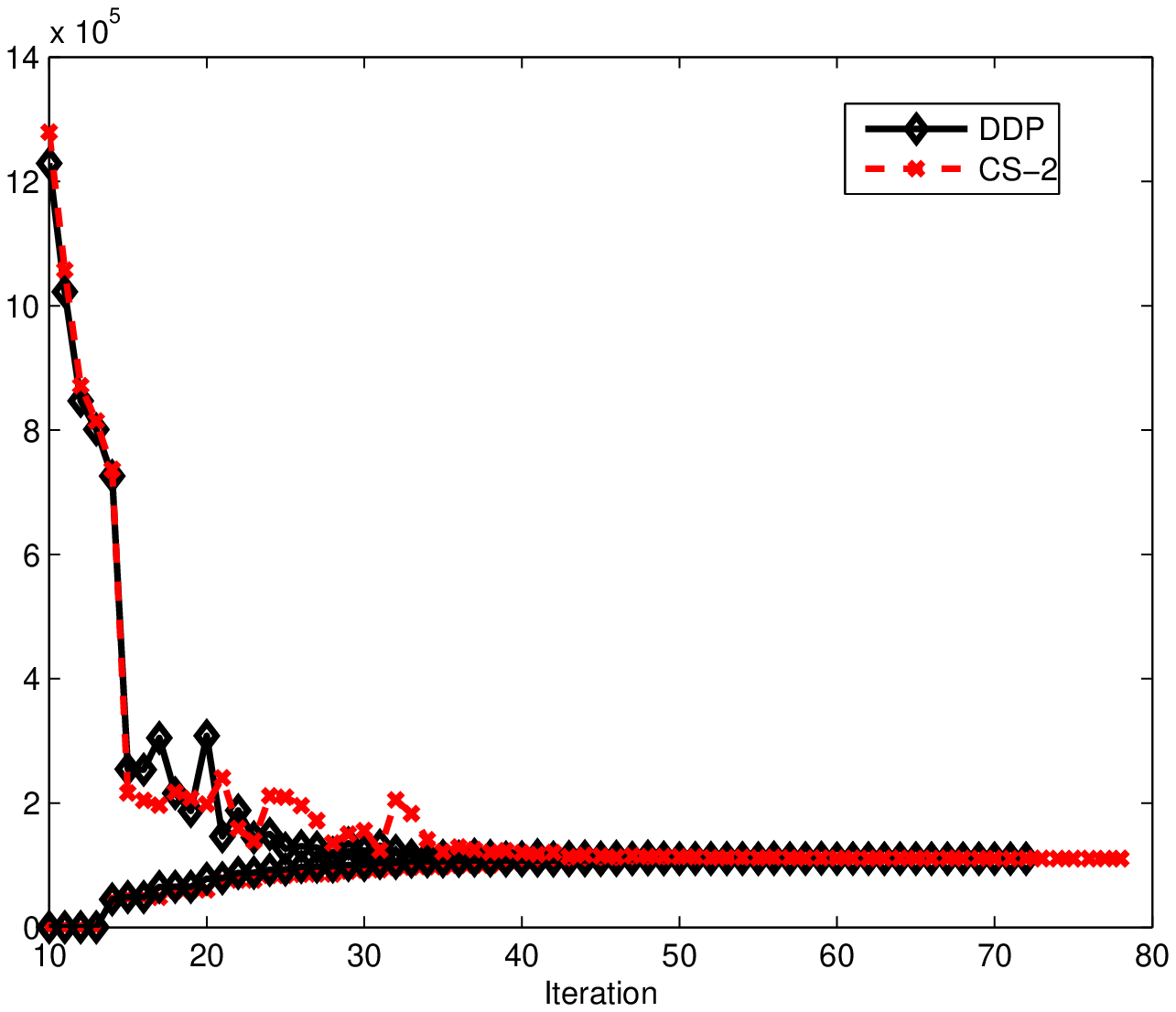}
\caption{Upper and lower bounds computed along the iterations of {\tt{DDP}} and {\tt{DDP-CS-2}} to solve an instance of \eqref{p2} for $T=600$ and  $\varepsilon=0.1$.}
\label{figure0}
\end{figure}

We report in Table \ref{table0}
the corresponding computational time and approximate optimal value obtained with each algorithm.
For {\tt{DDP}}, {\tt{DDP-CS-1}}, and {\tt{DDP-CS-2}}, we additionally report the number of iterations of the algorithm.
We see that on this instance, up to the precision $\varepsilon$, all algorithms provide the same approximate optimal value.
{\tt{Simplex}} is the quickest, {\tt{DP}} is by far the slowest,
and DDP and its variants are in between, with {\tt{DDP-CS-1}} and {\tt{DDP-CS-2}} variants requiring more iterations than DDP without cut selection
to converge. 
Though {\tt{DDP-CS-1}} and {\tt{DDP-CS-2}} run for the same number of iterations, {\tt{DDP-CS-2}} is quicker than {\tt{DDP-CS-1}} (and than {\tt{DDP}} too), due to the fact that
for many iterations and stages it selects much less cuts than {\tt{DDP-CS-1}}. More precisely, {\tt{DDP-CS-2}} is 13.7\% quicker than {\tt{DDP}}
and 20.4\% quicker than  {\tt{DDP-CS-1}}.
As an illustration, we plot on Figure \ref{figure2} 
approximations of value functions $\mathcal{Q}_t( y_t)$ for step $t=401$
estimated using DP and the cuts for these functions obtained at the last iteration of {\tt{DDP}}, {\tt{DDP-CS-1}}, and {\tt{DDP-CS-2}}.
Surprisingly, {\tt{DDP-CS-2}} only selected one cut at the last iteration.  
For {\tt{DDP-CS-1}}, we observe that
though 44 cuts are selected, they are all equal. This explains why graphically, we obtain on Figure \ref{figure2} 
the same cuts for {\tt{DDP-CS-1}} and {\tt{DDP-CS-2}} for $t=401$. Moreover, looking at Figure \ref{figure2}, we see that for $t=400$, 
the cuts built by {\tt{DDP}} have similar slopes and many of these cuts are useless, i.e., they are
below the approximate cost-to-go function on the interval $[-100, 2000]$. This explains why so many cuts are pruned.

Similar conclusions were drawn taking $T=96$ and several values of $t$, see Figure \ref{figure2}
for $T=96$ and $t=61$. 
For this experiment ($T=96$), at the last iteration, for $t=61$ {\tt{DDP-CS-2}} only selects one cut and {\tt{DDP}} 17 cuts (the number of iterations).
Surprisingly again, not only does {\tt{DDP-CS-1}} select more cuts (6 for $t=61$) but all these cuts are identical.

\begin{table}
\centering
\begin{tabular}{|c||c|c|c|}
\hline
{\tt{Algorithm}} &{\tt{Computational time}}& {\tt{Iterations}}& {\tt{Optimal value}}\\
\hline
{\tt{Simplex}} &   0.0868     &      -       & 110 660     \\
\hline
{\tt{DP}} &   20 623    &     -       &     110 660  \\
\hline
{\tt{DDP}}     &  76.0214  &       72     &   110 660\\
\hline
{\tt{DDP-CS-2}}     &  65.6318    &  78    &   110 660  \\
\hline
{\tt{DDP-CS-1}}    &  82.4705     &  78     &   110 660   \\
\hline
\end{tabular}
\caption{Computational time (in seconds) and approximate optimal values solving an instance of \eqref{p2}
with {\tt{Simplex}}, {\tt{DP}}, {\tt{DDP}}, {\tt{DDP-CS-1}}, and {\tt{DDP-CS-2}} with $T=600, N=2001$, and $\varepsilon=0.1$.}
\label{table0}
\end{table}

\begin{figure}
\begin{tabular}{ll}
\includegraphics[scale=0.45]{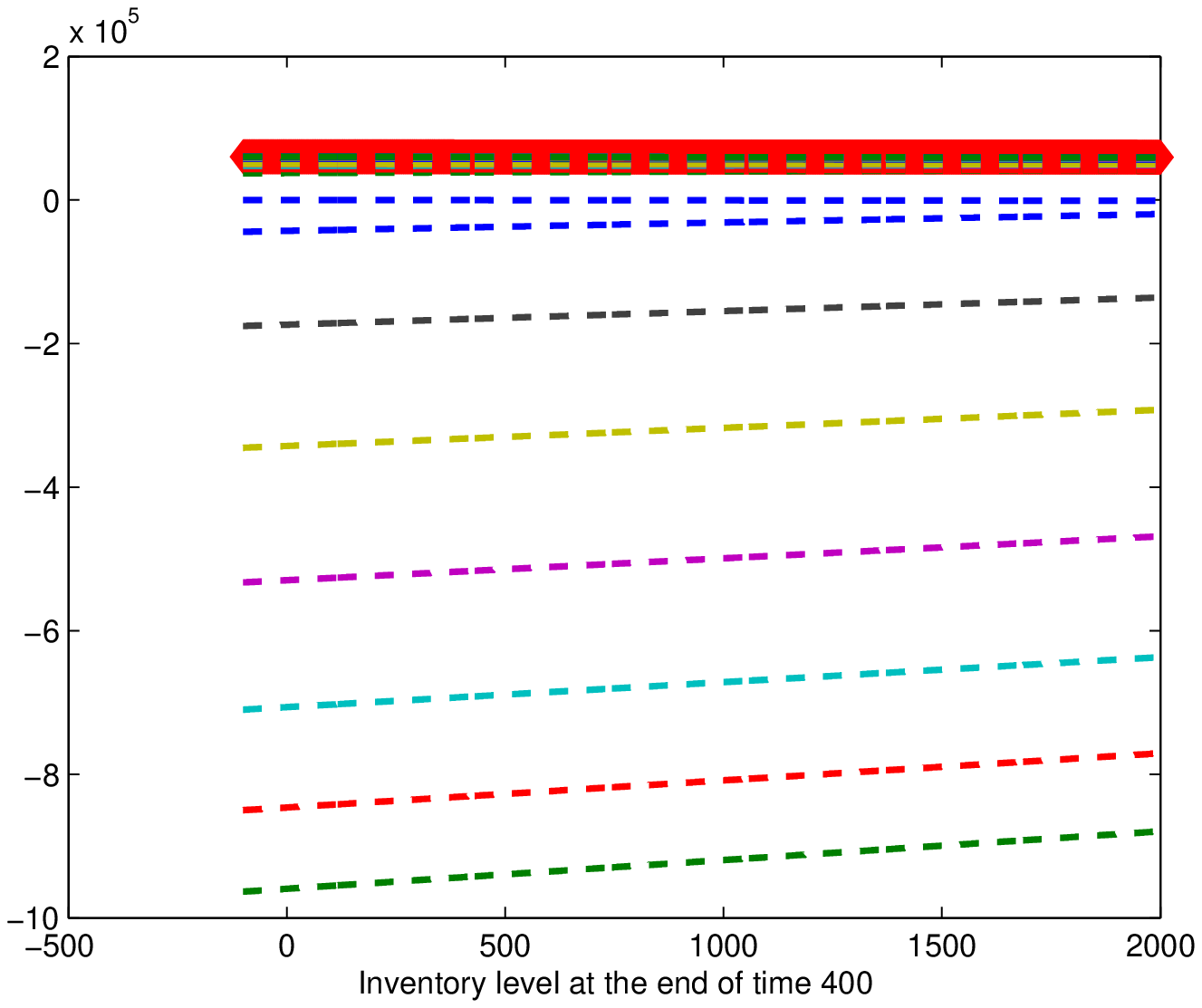}
&
\includegraphics[scale=0.45]{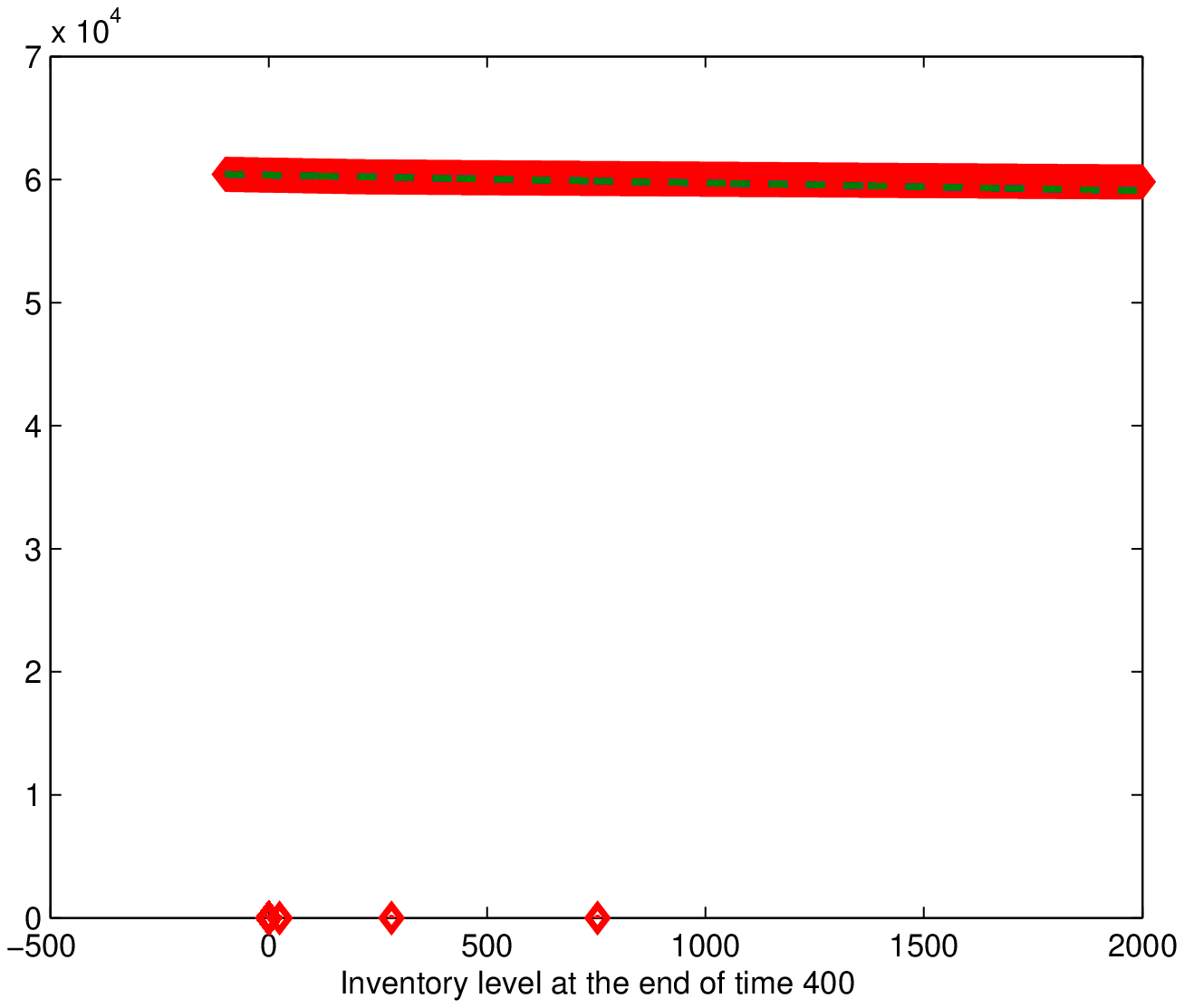}\\
\includegraphics[scale=0.45]{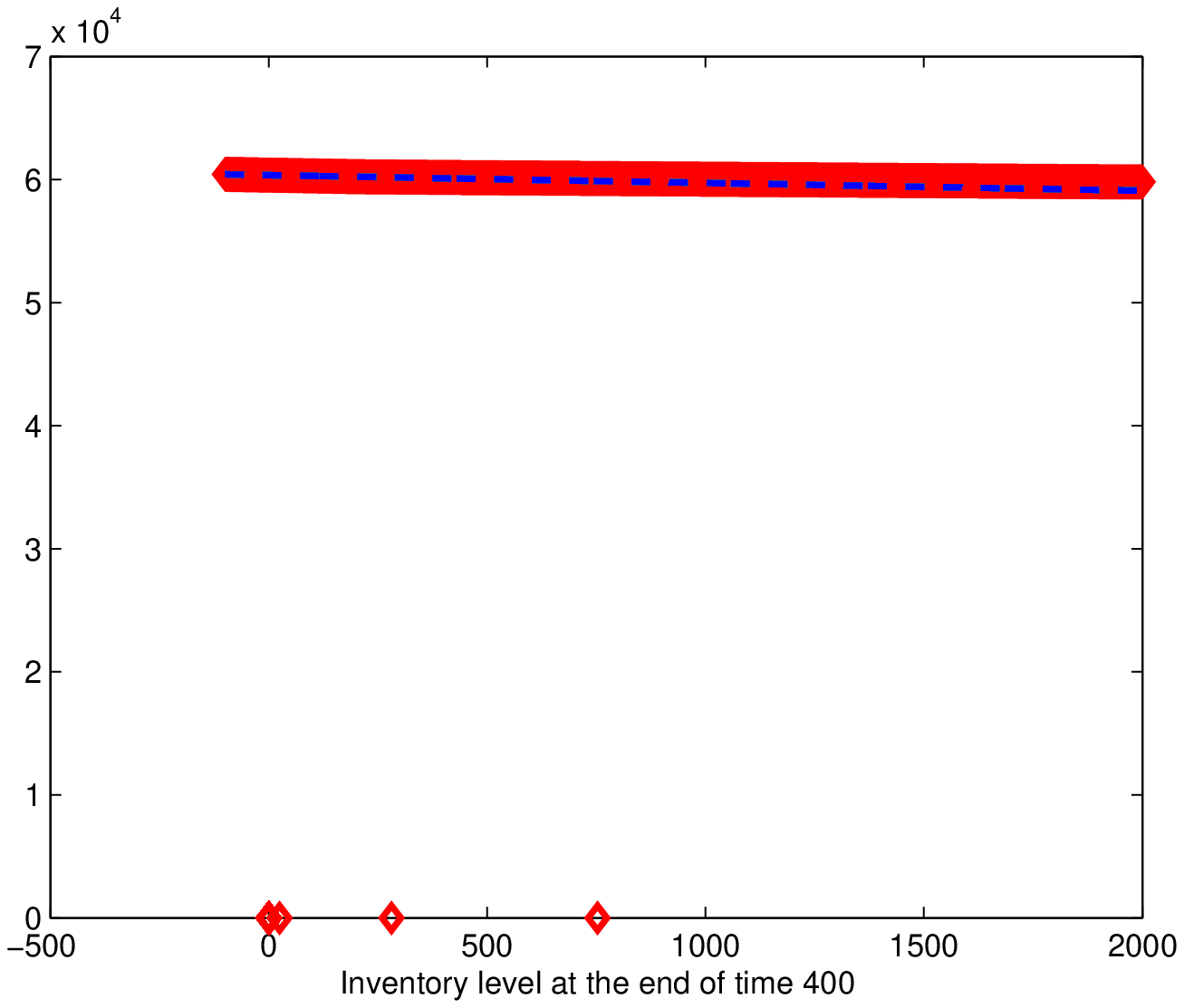}
&\includegraphics[scale=0.45]{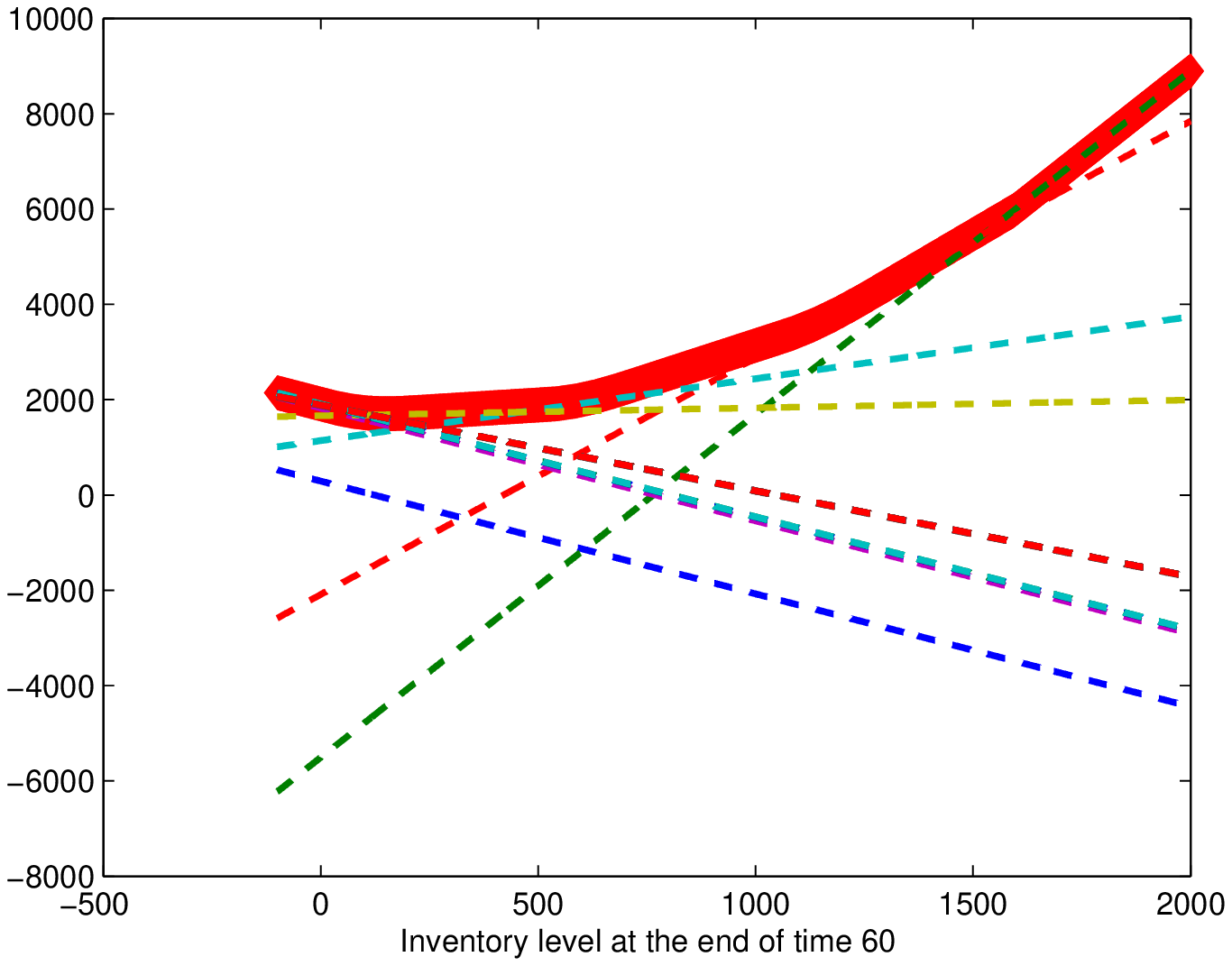}\\
\includegraphics[scale=0.45]{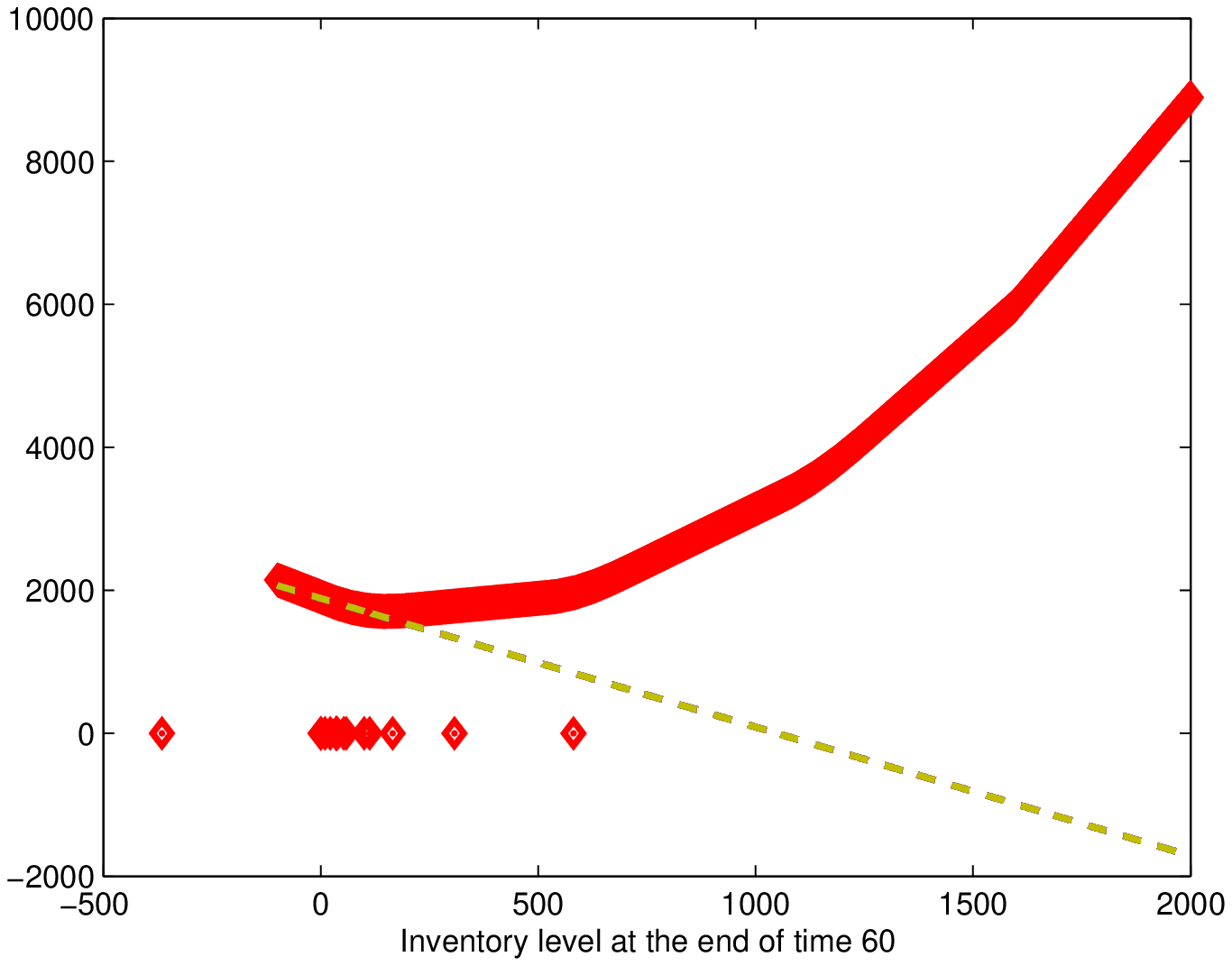}
&\includegraphics[scale=0.45]{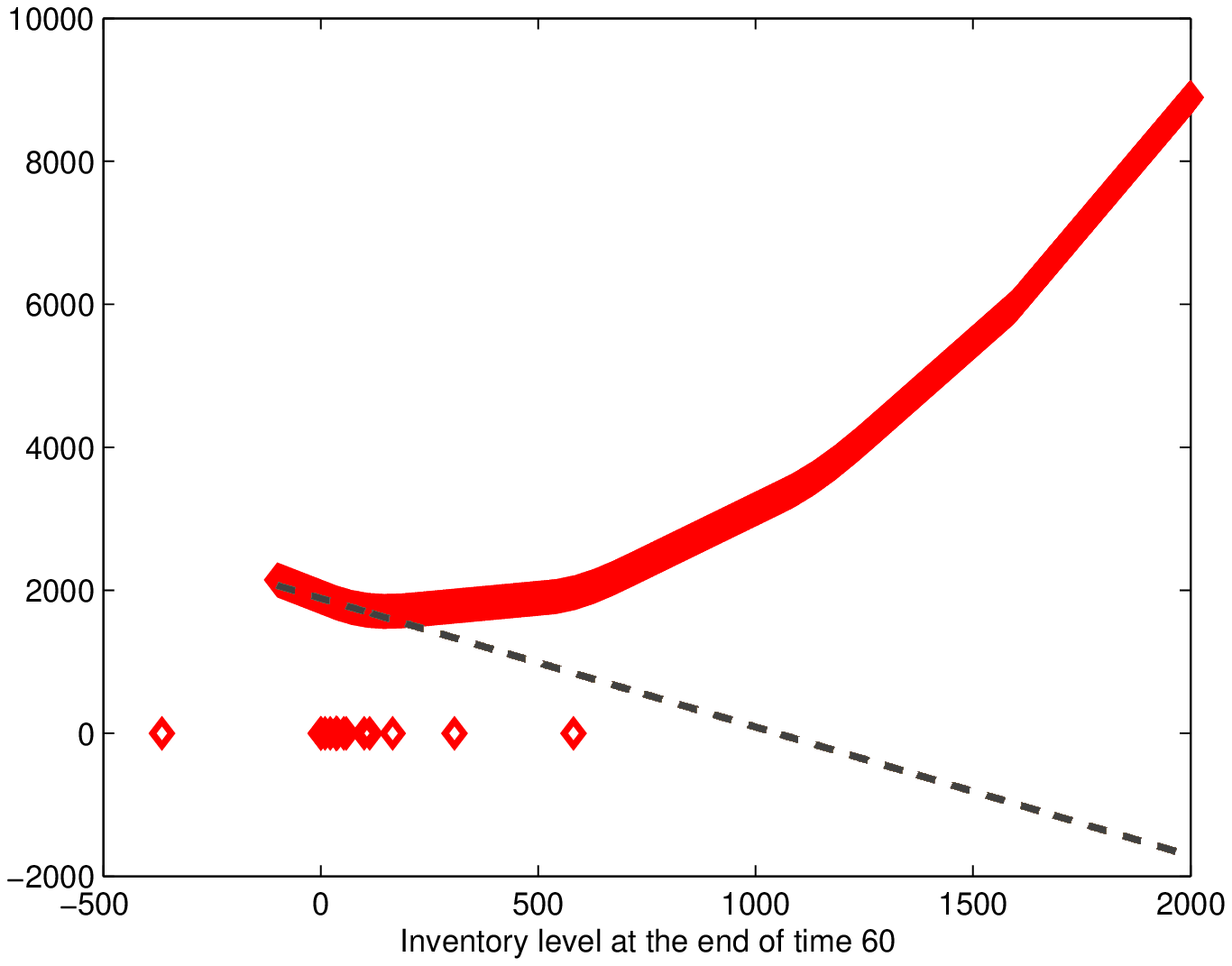}
\end{tabular}
\caption{Approximate value functions $\mathcal{Q}_t( y_t)$ estimated using DP (solid lines) and cuts for these functions (dashed lines) obtained at the last iteration
of {\tt{DDP}}, {\tt{DDP-CS-1}}, and {\tt{DDP-CS-2}}. Top left: $t=401, T=600$, {\tt{DDP}}.
Top right: $t=401, T=600$, {\tt{DDP-CS-1}}. Middle left: $t=401, T=600$, {\tt{DDP-CS-2}}. 
Middle right: $t=61, T=96$, {\tt{DDP}}. Bottom left: $t=61, T=96$, {\tt{DDP-CS-1}}. Bottom right: $t=61, T=96$, {\tt{DDP-CS-2}}.
Trial points less than $2000$ are represented by diamond markers on the $x$-axis.
}
\label{figure2}
\end{figure}

Finally, for DDP and its variants,
we report in Figure \ref{timeeps} the log-log evolution of the computational time for {\tt{DDP}} and its variants as a function of the number of steps $T$ (fixing $\varepsilon=0.1$)
running the algorithms for $T$ in the set $\{50,100,150,200,400,600,800, 1000, 5000\}$. We observe that for sufficiently large values of $T$,
{\tt{DDP-CS-2}} is the quickest.
On this figure, we also report the log-log evolution of the computational time as a function of $\varepsilon$
taking $\varepsilon$ in the set $\{0.00001,0.0001,0.001,0.01,0.1,1,5,10,50,100,500$, $1\,000,2\,000,3\,000,4\,000,5\,000,6\,000,7\,000, 8\,000$, \\$9\,000,10\,061\}$ and fixing 
$T=400$ for {\tt{DDP}} and {\tt{DDP-CS-2}} (the last value $10\,061$ of $\varepsilon$ taken corresponds to 25\% of the optimal value
of the problem). We see that unless $\varepsilon$ is very large, the computational time slowly decreases with $\varepsilon$. 

\begin{figure}
\begin{tabular}{ll}
\includegraphics[scale=0.5]{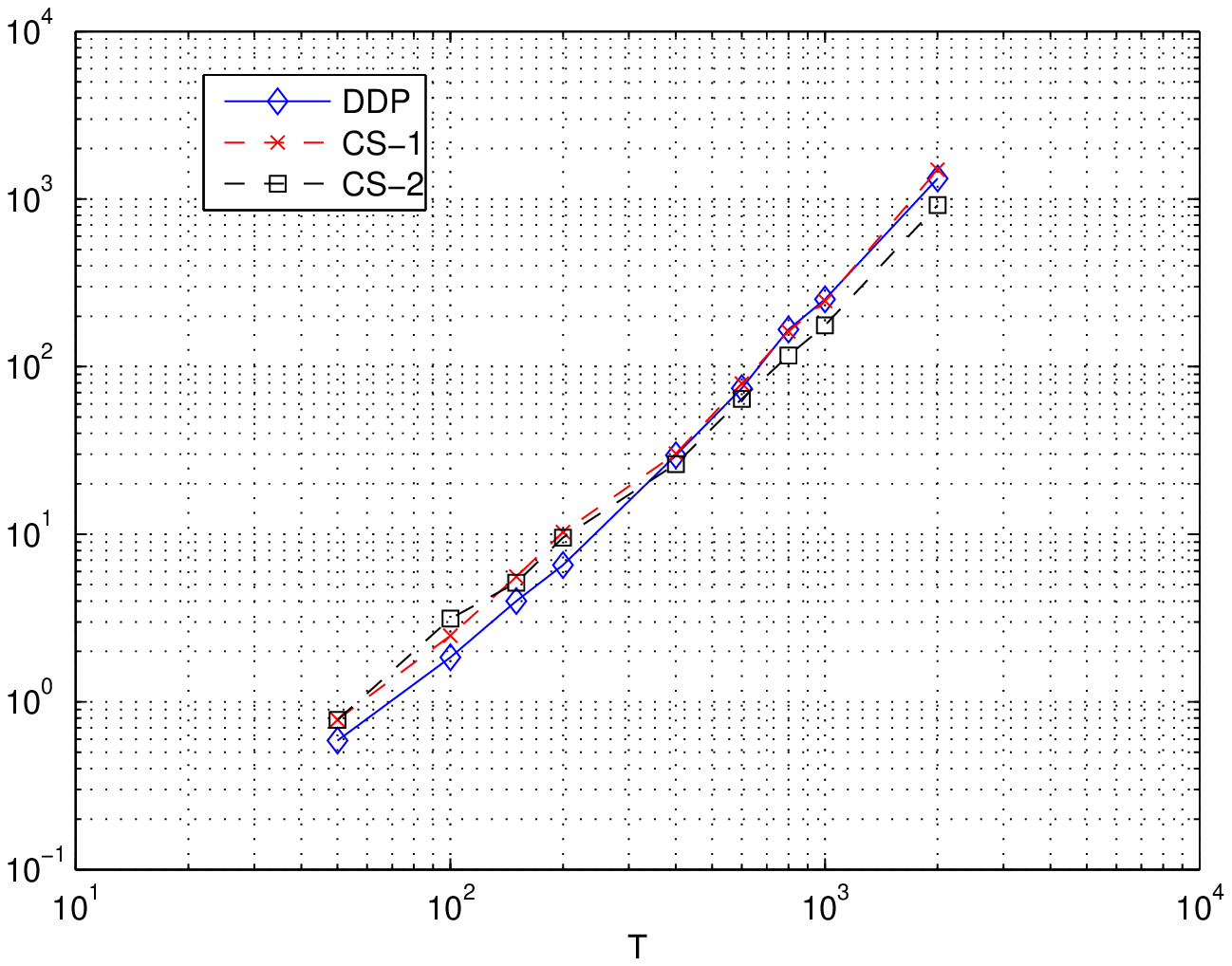}
&
\includegraphics[scale=0.5]{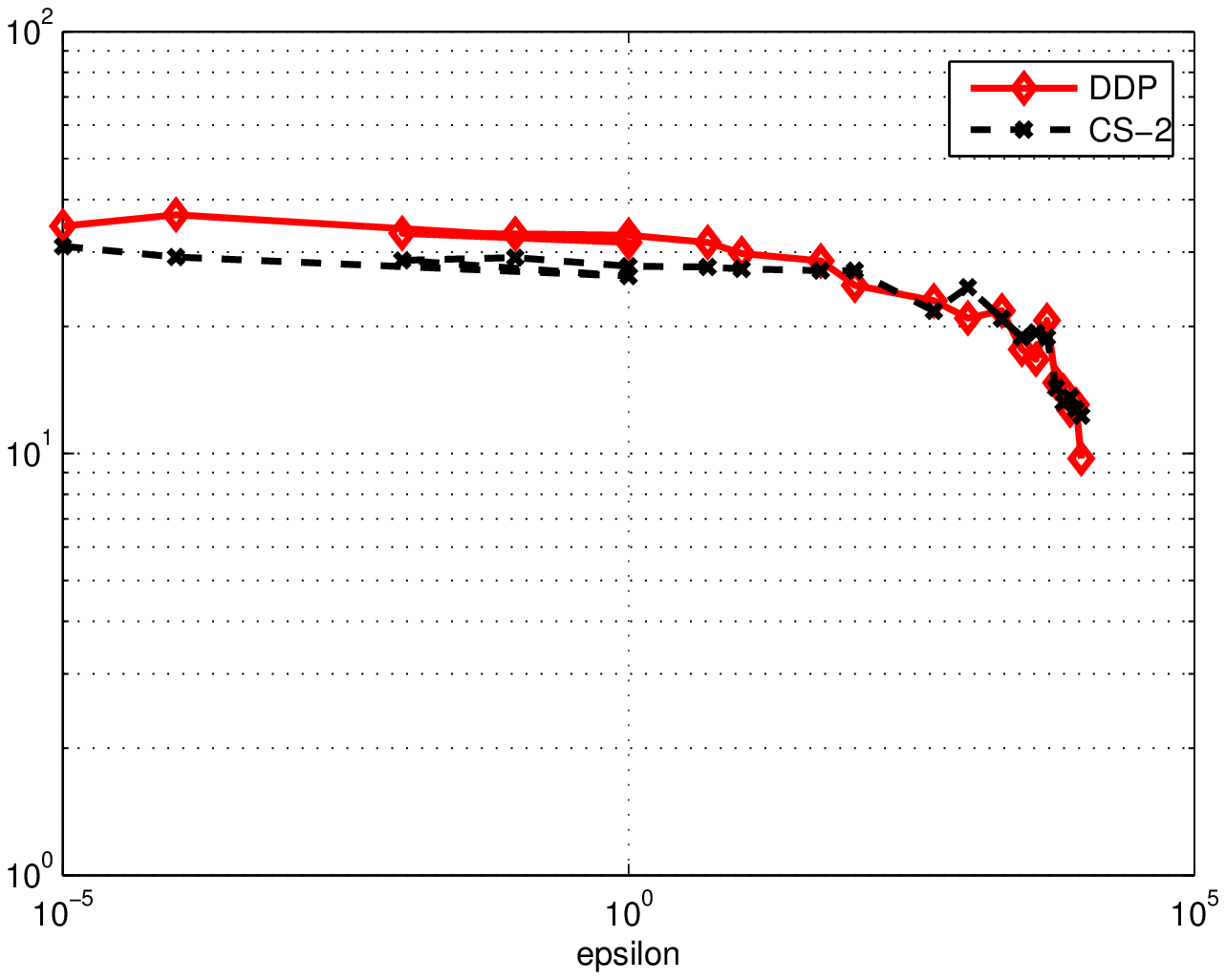}
\end{tabular}
\caption{Computational time (in seconds) as a function of $T$ and $\varepsilon$ for {\tt{DDP}} and its variants {\tt{DDP-CS-1}} and {\tt{DDP-CS-2}} ({\tt{CS-1}} and {\tt{CS-2}} for short)}.
\label{timeeps}
\end{figure}

\subsection{A portfolio problem}\label{portproblem}

Consider the portfolio problem with known returns
 \begin{equation}\label{p1}
   \begin{array}{l}
  \max \;  \sum\limits_{i=1}^{n+1}(1+r^i_{T}) x^i_{T} \\
  x^i_{t}=(1+r_{t-1}^i) x_{t-1}^i-y_t^i+z_t^i,\;      i=1,\ldots,n,\;t=1,\ldots,T,\\
  x_t^{n+1}=(1+r_{t-1}^{n+1})x_{t-1}^{n+1}
  +\sum\limits_{i=1}^{n}(1-\eta_i)y_t^i-  \sum\limits_{i=1}^{n}(1+\nu_i)z_t^i,\;t=1,\ldots,T,\\
  x_t^i \leq u_i \sum\limits_{i=1}^{n+1} (1+r_{t-1}^i) x_{t-1}^i,\;i=1,\ldots,n,\;t=1,\ldots,T,\\
  x_t^i\ge 0,\;i=1,\ldots,n+1,\;t=1,\ldots,T,\\
y_t^i\ge 0,z_t^i\ge 0,\;i=1,\ldots,n,\;t=1,\ldots,T.
 \end{array}
 \end{equation}
Here $n$ is the number of assets, asset $n+1$ is cash, $T$ is the number of time periods, $x_t^i$ is the value of asset $i=1,\ldots,n+1$ at the beginning of time 
period $t=1,\ldots,T$, $r_{t}^i$ is the return of asset $i$ for period $t$, $y_t^i$ is the amount of asset $i$ sold at time $t$, $z_t^i$ is the amount of asset $i$ bought  at 
time $t$, $\eta_i > 0$ and $\nu_i > 0$ are the respective transaction costs.
The components $x_0^i,i=1,\ldots,n+1$ are drawn independently of each other from the uniform distribution over the interval $[0,100]$
($x_0$ is the initial portfolio). The 
expression $\sum\limits_{i=1}^{n+1} (1+r_{t-1}^i) x_{t-1}^i$ is the budget available at the beginning of period $t$. 
The notation
$u_i$ in the third group of constraints is a parameter defining the maximal proportion that can be invested in financial security $i$.

We take $u_i=1$ for all $i$, and define several instances of \eqref{p1}
with $T=90$ and $n$ (the size of state vector $x_t$ in \eqref{optconvexd}) $\in$
$\{2,3,5,8,10,15,20,25,30,35,40$, $45,50,60,70,80$, $90,100,120,140,160,180$,\\$200,300,400,500, 600, 700, 800, 900, 1000, 1500\}$.
The returns of assets $i=1,\ldots,n$, are drawn independently of each other from the uniform distribution over the interval
$[0.00005, 0.0004]$, the return of asset $n+1$ is fixed to $0.0001$ (assuming daily returns, this amounts to a 3.72\% return over 365 days), and
transaction costs $\nu_i, \mu_i$ are set to $0.001$. 

In the notation of the previous section, these instances are solved using {\tt{Simplex}}, {\tt{DDP}}, {\tt{DDP-CS-1}},
and {\tt{DDP-CS-2}}. Since we consider in this section problems and state vectors of large size (state vector size up to $1500$ 
and problems with up to 405 090 variables and up to 675\,090 constraints [including box constraints]), {\tt{DP}} leads to prohibitive computational times and is not used.
The computational time required to solve these instances 
is reported in Table \ref{tabletime4} (all implementations are in Matlab, using 
Mosek Optimization Toolbox \cite{mosek} and for all algorithms the computational time includes the time to
fill the constraint matrices). For DDP and its variants, we take $\varepsilon=1$.
To check the implementation, we report in the Appendix, in Table \ref{tableopt4},
the approximate optimal values obtained for these instances with all algorithms.
We observe that
{\tt{Simplex}} tends to be quicker on small instances but when the state vector is large, DDP and its variants are much quicker.
The variants with and without cut selection have similar performances on most instances with {\tt{DDP-CS-1}} and {\tt{DDP-CS-2}}  tending to be quicker for the instances
with large $n$ (for instance for {\tt{DDP-CS-2}} compared to {\tt{DDP}}: 18 seconds instead of 21 seconds for $(T,n)=(90, 1500)$).  

If the DDP algorithms are very efficient, the variants with cut selection have not yielded a significant reduction in computational time
on these instances.  This can be partly explained by the small number of iterations (at most 5) needed for DDP algorithms to converge
on these runs. 

\begin{table}
\centering
\begin{tabular}{|c||c|c|c|c|c|c|c|c|}
\hline
$n$ &2&3&5&8&10&15&20&25\\
\hline
{\tt{Simplex}} &0.0847 &   0.0246    &    0.0413   &     0.0786   &     0.1070  &  0.2685 & 0.3831  &  0.6089\\
\hline
{\tt{DDP}}  &    0.2314  &  0.2440 &      0.4721  &     0.2845    &    0.5880  &  0.7401 & 0.6886 &   0.7738\\
\hline
{\tt{DDP-CS-2}} &   0.2652  &  0.2626   &    0.5021  &      0.3018    &    0.6129   & 0.6651  &0.7079  &  0.7800\\
\hline
{\tt{DDP-CS-1}}  &    0.2480  &  0.2630  &     0.5372   &     0.3105   &     0.6227  &  0.6888 & 0.7821 &   0.791\\
\hline
\hline
$n$ &30&35&40&45&50&60&70&80\\
\hline
{\tt{Simplex}}& 0.9670   & 1.3484 &   1.5092  &  1.9607  &  2.5572  &  3.5763 &   5.5887   & 7.2446\\
\hline
{\tt{DDP}} &          0.7735 &   0.7906 &   0.8337 &   0.8501 &   0.9177  &  0.9733  &  1.0739  &  1.1434\\
\hline
{\tt{DDP-CS-2}} &        0.8045  &  0.8198    &0.8676  &  0.8771 &   0.9520 &   1.0036  &  1.1068 &   1.1766\\
\hline
{\tt{DDP-CS-1}} &        0.8056  &  0.8267 &   0.8788   & 0.8816 &   0.9545  &  1.0094  &  1.1103  &  1.1795\\
\hline
\hline
$n$ &90&100&120&140&160&180&200&300\\
\hline
{\tt{Simplex}}&8.5491 &  12.1851 &  25.7598  & 36.5120  & 38.4824 &  46.0557 &  72.2004 & 129.2468  \\
\hline
{\tt{DDP}}& 1.1687  &  1.3347  &  1.4285   & 1.5798 &   1.7350 &   1.8178  &  2.0015   & 2.9424 \\   
\hline
{\tt{DDP-CS-2}} &   1.1833 &   1.3600 &   1.4464   & 1.5713  &  1.7479   & 1.8459  &  2.0315  &  2.9600\\    
\hline
{\tt{DDP-CS-1}}  &  1.1909  &  1.3571 &   1.4459  &  1.5728    &1.7509   & 1.8488  &  2.0413 &   2.9551  \\  
\hline
\hline
$n$ &400&500&600&700&800&900&1000&1500\\
\hline
{\tt{Simplex}}& 263.4561&  509.4387 &765.1   & 1084.8 & 1568  &  2080.9 &2922    &  10 449\\
\hline
{\tt{DDP}}  &    3.8153  &  5.7445  &  7.5   & 8.8 & 9.0    &   10.0 & 11    &    21 \\
\hline
{\tt{DDP-CS-2}} &   3.8197 &   5.7206& 7.4  &  8.6& 8.9   &    10.0  & 11 &  18\\
\hline
{\tt{DDP-CS-1}} &    3.8319 &   5.7955 & 7.4  &  8.7 & 9.0  &  10.1  &  11  &  18\\
\hline
\end{tabular}
\caption{Computational time (in seconds) to solve instances of \eqref{p1} with 
{\tt{Simplex}}, {\tt{DDP}}, {\tt{DDP-CS-1}}, and {\tt{DDP-CS-2}} for $T=90$ and several values of $n$.}
\label{tabletime4}
\end{table}

Finally, we apply {\tt{Simplex}}, {\tt{DDP}}, {\tt{DDP-CS-1}}, and {\tt{DDP-CS-2}} algorithms on a portfolio problem
of form \eqref{p1} using daily historical returns of $n$ assets of the S\&P 500 index for a period of $T$ days
starting on January 4, 2010, assuming these returns known (which provides us with the maximal possible return over this period
using these assets). We consider 8 combinations for the pair $(T,n)$:
$(25,50)$, $(50,50)$, $(75,50)$, $(100,50)$, $(25,100)$, $(50,100)$, $(75,100)$, and $(100,100)$.
We take again $\nu_i=\mu_i=0.001$ and $u_i=1$ for all $i$. The components $x_0^i,i=1,\ldots,n+1$ of $x_0$ 
are drawn independently of each other from the uniform distribution over the interval $[0,100]$.
The return of asset $n+1$ is $0.001$ for all steps.

For this experiment, we report the computational time and the approximate optimal values
in respectively Tables \ref{comptime5} and \ref{opt5}. Again the DDP variants are much quicker than simplex for problems
with large $T, n$.
The variants with cut selection are quicker, especially when $T$ and $n$ are large
(compared to {\tt{DDP}}, {\tt{DDP-CS-2}} is 5\% quicker for $(T,n)=(75, 100)$ and 6\% quicker for $(T,n)=(100, 100)$).
The number of iterations needed for the DDP variants to converge is given in Table \ref{iter5}.

\begin{table}
\centering
\begin{tabular}{|c||c|c|c|c|c|c|c|c|}
\hline
$(T,n)$ & $(25,50)$ & $(50,50)$  &  $(75,50)$ &  $(100,50)$   &  $(25,100)$   &  $(50,100)$   &  $(75,100)$  &  $(100,100)$    \\
\hline
{\tt{Simplex}} & 0.5642   & 1.2783  & 1.9757   &2.6020&2.6305&8.3588&9.2925& 12.1430\\
\hline
{\tt{DDP}}  & 0.3386  & 0.8705  & 1.6381  &3.2622&1.2789&2.7525&6.0626& 8.3887\\
\hline
{\tt{DDP-CS-2}}  &  0.3207 & 0.8138  & 1.2053  &2.8012&1.2520&2.2616&5.7634& 7.8885  \\
\hline
{\tt{DDP-CS-1}}  & 0.3206  & 0.8036  & 1.2022 &2.8261&1.2670&2.2801&5.8442& 9.1096\\
\hline
\end{tabular}
\caption{Computational time (in seconds) to solve instances of \eqref{p1} with 
{\tt{Simplex}}, {\tt{DDP}}, {\tt{DDP-CS-1}}, and {\tt{DDP-CS-2}} for several values of $(T,n)$ using historical returns of $n$ assets of S\&P 500 index.}
\label{comptime5}
\end{table}

\begin{table}
\centering
\begin{tabular}{|c||c|c|c|c|c|c|c|c|}
\hline
$(T,n)$ & $(25,50)$ & $(50,50)$  &  $(75,50)$ &  $(100,50)$   &  $(25,100)$   &  $(50,100)$   &  $(75,100)$  &  $(100,100)$    \\
\hline
{\tt{Simplex}} &  4 754.3  & 10 600   & 24 525&46 496&13 584&53 576&178 190& 403 800\\
\hline
{\tt{DDP}}  & 4 754.2  & 10 600  &24 525&46 496&13 584&53 576&178 190& 403 800\\
\hline
{\tt{DDP-CS-2}}  &  4 754.2 &  10 600 &24 525&46 496&13 584&53 576&178 190& 403 800\\
\hline
{\tt{DDP-CS-1}}  & 4 754.2  & 10 600  &24 525&46 496&13 584&53 576&178 190& 403 800\\
\hline
\end{tabular}
\caption{Approximate optimal value of instances of \eqref{p1} obtained with 
{\tt{Simplex}}, {\tt{DDP}}, {\tt{DDP-CS-1}}, and {\tt{DDP-CS-2}} for several values of $(T,n)$ using historical returns of $n$ assets of S\&P 500 index.}
\label{opt5}
\end{table}

\begin{table}
\centering
\begin{tabular}{|c||c|c|c|c|c|c|c|c|}
\hline
$(T,n)$ & $(25,50)$ & $(50,50)$  &  $(75,50)$ &  $(100,50)$   &  $(25,100)$   &  $(50,100)$   &  $(75,100)$  &  $(100,100)$    \\
\hline
{\tt{DDP}}  &  6 &   7&7&11&12&13& 17&17\\
\hline
{\tt{DDP-CS-2}}  & 5  & 6  &6&10&13&12&20& 23\\
\hline
{\tt{DDP-CS-1}}  &  5 &  6 &6&10&13&12&20& 20\\
\hline
\end{tabular}
\caption{Number of iterations of DDP method with 
{\tt{DDP}}, {\tt{DDP-CS-1}}, and {\tt{DDP-CS-2}} for  instances of \eqref{p1} built using historical returns of $n$ assets of S\&P 500 index.}
\label{iter5}
\end{table}

\section{Conclusion} 

We proved the convergence of DDP with cut selection for cut selection strategies satisfying Assumption (H2).
This assumption is satisfied by the {\em{Territory algorithm}} \cite{pfeifferetalcuts}, the Level $H$ cut selection \cite{dpcuts0},
as well as the limited memory variant we proposed.

Numerical simulations on an inventory and a portfolio problem have shown that DDP can be much quicker than simplex
and that cut selection can speed up the convergence of DDP for some large scale instances.
For instance, for the simulated portfolio problem with $(T,n)=(90, 1500)$ of Section \ref{numsim}, DDP was about 500 times
quicker than simplex. We also recall that for the inventory problem solved for $T=600$, {\tt{DDP-CS-2}} (that uses the limited memory variant) was 13.7\% quicker than {\tt{DDP}}
and 20.4\% quicker than  {\tt{DDP-CS-1}}. A possible explanation is that {\tt{DDP}} produced 
a large number of dominated/useless cuts, which are below the useful cuts on the whole range of admissible
states. We expect more generally the cut selection strategies to be helpful when the number of iterations of DDP
(and thus the number of cuts built) is large.

For the porfolio management instances with large $n$,
{\tt{DDP-CS-1}} and {\tt{DDP-CS-2}}  tend to be quicker (for the tests using historical daily returns of S\&P 500 index,
compared to {\tt{DDP}}, {\tt{DDP-CS-2}} is 5\% quicker for $(T,n)=(75, 100)$ and 6\% quicker for $(T,n)=(100, 100)$).

The proof of this paper can be extended in several ways:
\begin{itemize}
\item As in \cite{guigues2014cvsddp}, we can consider the case where $f_t$ and the constraint set for step $t$ depend 
not only on $x_{t-1}$ but on the full history
of decisions $x_1, x_2, \ldots, x_{t-1}$. In this case, $\mathcal{Q}_{t}$ also depends on
the full history
of decisions $x_1, x_2, \ldots, x_{t-1}$. This larger problem class was considered for risk-averse convex programs without cut selection
in \cite{guigues2014cvsddp}. 
\item It is also possible to consider a variant of DDP that uses a backward pass to compute
the cuts, i.e., that computes the cuts using at iteration $k$ function $\mathcal{Q}_{t+1}^k$ instead of
$\mathcal{Q}_{t+1}^{k-1}$  in \eqref{forward1}. The corresponding convergence proof can be easily obtained following the
convergence proof of Theorem \ref{convddpth}. 
\item Extend the convergence proof for decomposition methods with cut selection
to solve multistage risk-averse nonlinear stochastic optimization problems.
\end{itemize}

\section*{Acknowledgments} The author's research was 
partially supported by an FGV grant, CNPq grant 307287/2013-0, 
FAPERJ grants E-26/110.313/2014 and E-26/201.599/2014.
We would also like to thank the reviewers for beneficial comments and suggestions.

\addcontentsline{toc}{section}{References}
\bibliographystyle{plain}
\bibliography{Risk_Averse_SDDP}

\section*{Appendix}

\begin{table}[H]
\centering
\begin{tabular}{|c||c|c|c|c|c|c|c|c|}
\hline
$n$ &2&3&5&8&10&15&20&25\\
\hline
{\tt{Simplex}}& 63.9523 & 265.7561 & 300.7989  &  527.9508 &  515.5728 & 649.9466 &  1 156 &  1 138.4\\ 
\hline
{\tt{DDP}}  &    63.9314 & 265.6687 & 300.7989 & 527.9489 & 515.5728 &    649.8515 & 1 156 &  1 138.4\\
\hline
{\tt{DDP-CS-2}}   & 63.9314 & 265.6687 & 300.7989 & 527.9489 & 515.5728 &    649.8515 &  1 156 &  1 138.4\\
\hline
{\tt{DDP-CS-1}}  & 63.9314 & 265.6687 & 300.7989 & 527.9489 & 515.5728 &    649.8515 & 1 156  & 1 138.4\\
\hline
\hline
$n$ &30&35&40&45&50&60&70&80\\
\hline
{\tt{Simplex}}& 1 620.3 &   1 877.8  &  2 127.4  &  2 037.0  & 2 183.4 &  2 894.3 &  3 420.6   & 4 101.5\\
\hline
{\tt{DDP}} &  1 620.3  &  1 877.8  &  2 127.4  &  2 037.0 &  2 183.4  & 2 894.3  & 3 420.6  &  4 101.5\\      
\hline
{\tt{DDP-CS-2}} &  1 620.3  &  1 877.8  &  2 127.4  &  2 037.0 &  2 183.4 &  2 894.3 &  3 420.6 &   4 101.5\\  
\hline
{\tt{DDP-CS-1}} &  1 620.3   & 1 877.8  &  2 127.4  &  2 037.0  & 2 183.4 &  2 894.3 &  3 420.6 &   4 101.5\\
\hline
\hline
$n$ &90&100&120&140&160&180&200&300\\
\hline
{\tt{Simplex}}& 4 919  &  5 147  &  6 306 &   6 835 &   8 254 &   9 151 &  9 773 &  16 016 \\   
\hline
{\tt{DDP}}& 4 919  &  5 147  &  6 306 &   6 835 &   8 254 &   9 151 &  9 773 &  16 016 \\   
\hline
{\tt{DDP-CS-2}} &  4 919  &  5 147  &  6 306 &   6 835 &   8 254 &   9 151 &  9 773 &  16 016 \\   
\hline
{\tt{DDP-CS-1}}  & 4 919  &  5 147  &  6 306 &   6 835 &   8 254 &   9 151 &  9 773 &  16 016 \\   
\hline
\hline
$n$ &400&500&600&700&800&900&1000&1500\\
\hline
{\tt{Simplex}}& 21 050  &  25 863 &30 410 &   36 229 &   41 145  &  45 024  &  50 994 &   76 695\\
\hline
{\tt{DDP}}  &    21 050  &  25 863 &30 410 &   36 229 &   41 145  &  45 024  &  50 994 &   76 695\\
\hline
{\tt{DDP-CS-2}} &  21 050  &  25 863 &30 410 &   36 229 &   41 145  &  45 024  &  50 994 &   76 695\\
\hline
{\tt{DDP-CS-1}} &  21 050  &  25 863 &30 410 &   36 229 &   41 145  &  45 024  &  50 994 &   76 695\\
\hline
\end{tabular}
\caption{Optimal values of instances of \eqref{p1} considered in Section \ref{portproblem} solved with
{\tt{Simplex}}, {\tt{DDP}}, {\tt{DDP-CS-1}}, and {\tt{DDP-CS-2}} for $T=90$ and several values of $n$.}
\label{tableopt4}
\end{table}
  
\end{document}